\documentclass[11pt]{amsart}
\usepackage{amssymb}
\usepackage{amsfonts}
\usepackage{color}

\setcounter{MaxMatrixCols}{10}

\newtheorem{theorem}{Theorem}
\theoremstyle{plain}

\newtheorem{definition}{Definition}

\newtheorem{lemma}{Lemma}

\newtheorem{proposition}{Proposition}
\newtheorem{remark}{Remark}

\numberwithin{equation}{section}
\oddsidemargin 0in
\evensidemargin 0in
\textwidth 6.5in
\textheight 9in
\topmargin -.5in

\begin{document}
\title{\textbf{Hermitian functional representation
of free L\'{e}vy processes}}
\author{Jose-Luis P\'{e}rez G.}
\address{Departamento de Probabilidad y Estad\'{\i}stica\\
CIMAT, Guanajuato, M\'{e}xico.\\
jluis.garmendia@cimat.mx}
\author{V\'{\i}ctor P\'{e}rez-Abreu}
\address{Instituto Polit\'{e}cnico Nacional, M\'{e}xico.\\
pabreuv@gmail.com}
\author{Alfonso Rocha-Arteaga.}
\address{Facultad de Ciencias F\'{\i}sico-Matem\'{a}ticas\\
Universidad Aut\'{o}noma de Sinaloa, M\'{e}xico.\\
arteaga@uas.edu.mx}

\begin{abstract}
A functional representation of free L\'evy
processes is established via an ensemble of unitarily
invariant Hermitian matrix-valued L\'evy processes. This is accomplished by proving functional
asymptotics of their empirical spectral processes towards the law of a free L\'evy
processes. This result recovers a functional version of Wigner's
theorem and introduces a functional version of Marchenko-Pastur's theorem
providing the free Poisson process as the noncommutative limit process.

\ \ \

\textbf{Key words: }Asymptotic spectral distribution, Burgers equation, free
Brownian motion, free infinitely divisible distribution, Hermitian Brownian motion, Hermitian L\'{e}vy process, interacting
particles system, measure-valued process, Bercovi-Pata bijection.
\end{abstract}

\maketitle

\section{Introduction}

Let $\left \{  B^{(n)}(t)\right \}  _{t\geq0}=\{(b_{jk}(t))\}_{t\geq0}$ be the
$n\times n$ Hermitian matrix-valued Brownian motion where $(b_{jj}%
(t))_{j=1}^{n},$ $(\mathrm{Re}b_{jk}(t))_{j<k},$ $(\mathrm{Im}b_{jk}%
(t))_{j<k}$ is a set of $n^{2}$ independent one-dimensional Brownian motions
with parameter $\frac{t}{2}(1+\delta_{jk}).$ This matrix-valued process was
first considered by Dyson \cite{Dy62} and the study of its eigenvalue process
leads to several primary results in {Random Matrix Theory} (RMT), noncolliding
particles, free probability, and laws of noncommutative processes.

For any fixed $t>0$, $B^{(n)}(t)$ is a Gaussian Unitary Ensemble (GUE)
of random matrices with parameter $t$ and its matrix distribution is invariant
under unitary conjugation as well as infinitely divisible with respect to the
classical convolution of matrix distributions, \cite{A}, \cite{Me}. Let
$\{(\lambda_{1}(t),\lambda_{2}(t),...,\lambda_{n}(t))\}_{t\geq0}$ be the
$n$-dimensional stochastic process of the eigenvalues of $B^{(n)}$ and
consider the empirical spectral process of the re-scaled process 
$B^{(n)}/\sqrt{n}$
\begin{equation}
\mu_{t}^{(n)}=\frac{1}{n}\sum_{j=1}^{n}\delta_{\widetilde{\lambda}_{j}%
	(t)},\quad t\geq0,\label{22}%
\end{equation}
where $\widetilde{\lambda}_{j}(t)=\lambda_{j}(t)/\sqrt{n}$ and $\delta_{x}$ is
the unit mass at $x.$

First, from the fundamental work of Wigner \cite{Wig} in RMT, for each fixed $t>0,$
$\mu_{t}^{(n)}$ converges as $n$ goes to infinity, weakly almost surely, to
the semicircle (Wigner) distribution with parameter $t$\textcolor{green}{,}

\begin{equation}
\mathrm{w}_{t}(\mathrm{d}x)=\frac{1}{2\pi t}\sqrt{4t-x^{2}}{\ 1}_{\left[
	-2\sqrt{t},2\sqrt{t}\right]  }(x)\mathrm{d}x;\label{22a}%
\end{equation}
see also \cite{A}, \cite{Me}, \cite{Ta12}. The Wigner distribution
$\mathrm{w}_{t}$ is infinitely divisible with respect to the free
convolution and it also appears as the limiting distribution in the free
central limit theorem \cite{hp}, \cite{NS},\  \cite{VDN}.

In this sense, $\mathrm{w}_{1}$ is the free counterpart of the Gaussian
distribution in classical infinite divisibility, playing in free probability
the role the Gaussian distribution does in classical probability. This \ is
the starting point of the subject of free infinite divisibility, \cite{hp},
\cite{NS},\  \cite{VDN}. Moreover, the family $\left \{  \mathrm{w}_{t}\right \}
_{t\geq0}$ is the law of free Brownian motion, a family of selfadjoint
elements $\left \{  Z_{t}\right \}  _{t\geq0}$ in a noncommutative probability
space that has free increments and is such that for each $0\leq s\leq t$,
$Z_{t}-Z_{s}$ has the law $\mathrm{w}_{t-s}$; see Biane \cite{Bi}.

Second, keeping $n\geq1$ fixed, in a pioneering work, Dyson \cite{Dy62}
realized that the eigenvalue dynamics is described by a diffusion process with
non-smooth drift satisfying the It\^{o} Stochastic Differential Equation
(SDE)
\begin{equation}
\mathrm{d}\lambda_{i}(t)=\mathrm{d}W_{i}(t)+\sum_{j\neq i}\frac{\mathrm{d}%
	t}{\lambda_{i}(t)-\lambda_{j}(t)},\quad t\geq0,1\leq i\leq n,\label{21}%
\end{equation}
where $W_{1},...,W_{n}$ are independent one-dimensional standard Brownian
motions, see also \cite{A}, \cite{Ta12}. The stochastic process $\left \{
\left(  \lambda_{1}(t),\lambda_{2}(t),...,\lambda_{n}(t)\right)  \right \}
_{t\geq0}$ is called the Dyson Brownian motion corresponding to the GUE. This
is a primary example of a system of interacting particles governed by a SDE
with strong interactions due to the non-smooth drift coefficient, a phenomenon
associated with the process of eigenvalues of several matrix continuous-time
processes; see \cite{Bru1}, \cite{Bru2}.

Third, a functional version of Wigner's theorem is possible: this follows from
the study of the limiting laws of measure-valued processes of interacting
diffusions with non-smooth drift coefficient, as considered by Rogers and Shi
\cite{rosh}. Namely, the empirical measure-valued processes $\{ \{ \mu
_{t}^{(n)}\}_{t\geq0}:n\geq1\}$ of the strong interacting particles (\ref{21})
converge to the family of measures $\left \{  \mathrm{w}_{t}\right \}  _{t\geq
	0}.$ This functional asymptotic takes place in $C\left(  \mathbb{R}_{+}%
,\Pr(\mathbb{R})\right)  $, the space of continuous functions from
$\mathbb{R}_{+}\ $into$\  \Pr(\mathbb{R})$ endowed with the topology of
uniform convergence on compact intervals of $\mathbb{R}_{+}$, where
$\Pr(\mathbb{R})\ $is the space of probability measures on $\mathbb{R} $
endowed with the topology of weak convergence; see also \cite{A}, \cite{CG}.

A similar functional asymptotic behavior in the case of other matrix
(continuous-time) diffusions has been considered, by Chan \cite{cha}, and Rogers and Shi \cite{rosh} leading to free
Brownian motion; and by \cite{CG}, \cite{AT} to other noncommutative processes like
the dilation of the free Poisson process. The latter is a
functional version of the Marchenko--Pastur law, but the noncommutative
limiting process is not a free L\'{e}vy process. The case of a
symmetric fractional Brownian motion was considered in \cite{PPP}, obtaining
the non-commutative law of the fractional Brownian motion introduced in
\cite{NT}.

The goal of this paper is to establish a functional representation of free L\'evy processes
by matrix L\'evy processes. This
generalizes the results for free Brownian motion to general free L\'{e}vy
processes. While the classical works of Chang \cite{cha} and Rogers and Shi \cite{rosh} deal with continuous diffusions, our matrix processes have jumps. Up to the best of our knowledge, this is the first time that convergence of measure-valued empirical processes
arising from matrix processes with jumps are considered.

More specifically, the main result of this paper is summarized as follows. Let
$\mathcal{D}(\mathbb{R}_{+},\mathrm{Pr}(\mathbb{R}))$ denote the space of
right continuous functions with left limits from $\mathbb{R}_{+}\ $%
into$\  \Pr(\mathbb{R}),$ endowed with the Skorohod topology, where
$\Pr(\mathbb{R})\ $is the space of probability measures on $\mathbb{R}$
endowed with the topology of weak convergence. For a given $n\times n$
Hermitian process $\left \{  X_{t}^{(n)}:t\geq0\right \}  $ let, for each
$t\geq0$, $\lambda_{1}^{(n)}(t)\geq \lambda_{2}^{(n)}(t)\geq \cdots \geq
\lambda_{n}^{(n)}(t) $ denote the ordered eigenvalues of $X_{t}^{(n)}$. The
empirical spectral measure-valued process of $\left \{  X_{t}^{(n)}%
:t\geq0\right \}  $ is defined as%
\begin{equation}
\mu_{t}^{(n)}(dx)=\frac{1}{n}\sum_{m=1}^{n}\delta_{\lambda_{m}^{(n)}%
	(t)}(dx)\text{, }t\geq0.\label{mvp}%
\end{equation}

\begin{theorem}
	\label{main} Given a free L\'{e}vy process $\left \{ Z_{t}:t\geq0\right \} $, there
	exists an ensamble of matrix L\'{e}vy processes $\left \{  X_{t}^{(n)}%
	:t\geq0\right \}  _{n\geq1}$, such that:
	
	a) For each $n\geq1,$ $\left \{  X_{t}^{(n)}:t\geq0\right \}  $ is a unitarily
	invariant L\'{e}vy process in the space of $n\times n$ Hermitian matrices.
	
	b) For each $n>1$ and $t>0$, the matrix distribution of $X_{t}^{(n)}$ is
	absolutely continuous with respect to Lebesgue measure on $\mathbb{R}^{n^{2}}
	$ and $P\left(  X_{t}^{(n)}\text{ has simple spectrum}\right)  =1$.
	
	c) For each $n\geq1$, the non-zero jumps of $X_{t}^{(n)}$, $\Delta X_{t}%
	^{(n)}=X_{t}^{(n)}-X_{t-}^{(n)}$, are of rank one.
	
	d) The empirical spectral measure-valued processes $\left \{  \mu_{t}%
	^{(n)}:t\geq0\right \}  _{n\geq1}$ converge weakly in probability to the law of $\left \{ Z_{t}:t\geq0\right \} $ as $n\rightarrow \infty$, in the space
	$\mathcal{D}(\mathbb{R}_{+},\mathrm{Pr}(\mathbb{R}))$.
\end{theorem}

We should emphasize that our main contribution is the functional convergence in probability in the space $\mathcal{D}(\mathbb{R}_{+},\mathrm{Pr}(\mathbb{R}))$ of the sequence of empirical spectral processes given by \eqref{mvp} to the law of a free L\'evy process. In particular this implies the convergence of finite dimensional distributions which can be obtained by the results in \cite{BG} and \cite{ca}, together with Voiculescu's asymptotic freeness theorem found in \cite{V00} for independent unitary invariant Hermitian random matrices.

As a particular case, we obtain a functional version of the Marchenko--Pastur
theorem, where the asymptotic noncommutative process is the free Poisson
process.

The strategy to prove Theorem \ref{main} and the needed principal results
are as follows. Section 2 contains the background on Hermitian L\'{e}vy
processes and free L\'{e}vy processes. Section 3
introduces the ensembles of Hermitian L\'{e}vy processes $\left \{
X_{t}^{(n)}:t\geq0\right \} _{n\geq1}$ of Theorem \ref{main} via the
appropriate characteristic triplets associated to the free L\'{e}vy process $%
\left \{ Z_{t}:t\geq0\right \} $ and with the properties (a)--(c) in Theorem %
\ref{main}. Section 4 deals with the dynamics of the semimartingales of the
eigenvalues of $X_{t}^{(n)}$, for which we use an It\^{o} formula due
to \cite{PV} and helpful asymptotics for the associated local martingales
are also proved. Section 5 presents the proof of the tightness of the
spectral measure-valued processes $\{ \mu_{t}^{(n)}:t\geq0\}_{n\geq1}$ in
the space $\mathcal{D}(\mathbb{R}_{+},\mathrm{Pr}(\mathbb{R}))$, which, as
expected, is more complex than the Brownian case due to the jumps$.$ The
key facts are that for each $n\geq1$, all the jumps of the Hermitian L\'{e}%
vy process $\left \{ X_{t}^{(n)}:t\geq0\right \} $ are of rank one. Finally, 
Theorem \ref{thfbm} in Section 6 characterizes the Burgers
equation satisfied by the Cauchy transform of the limiting family of laws $\{ \mu_{t}:t\geq0\}$ of
the  sequence of spectral measure valued processes $\{ \mu_{t}^{(n)}:t\geq0\}_{n\geq1}$
in $\mathcal{D}(\mathbb{R}_{+},\mathrm{Pr}(\mathbb{R}))$. This allows us to identify the family  $\{
\mu_{t}:t\geq0\}$ as the law of the free L\'{e}vy process $\left \{
Z_{t}:t\geq0\right \} $, employing a result of Bercovici and Voiculescu \cite%
{BV} for free infinitely divisible measures with unbounded support (see also \cite{ca} and \cite{HS}).

\section{Preliminaries on Hermitian and free L\'{e}vy processes}

\subsection{Unitary invariant Hermitian L\'{e}vy processes}

In this section we consider a class of Hermitian L\'{e}vy processes whose
distributions are invariant under unitary conjugation.

Let $\mathbb{M}_{n}=\mathbb{M}_{n}\left( \mathbb{C}\right) $ denote the
linear space of $n\times n$ matrices with complex entries with scalar
product $\left \langle A,B\right \rangle $ $=~\mathrm{tr}\left(
B^{\ast}A\right) $ and the Frobenius norm $\left \Vert A\right \Vert =\left[
\mathrm{tr}\left( A^{\ast}A\right) \right] ^{1/2}$ where $\mathrm{tr}$
denotes the (non-normalized) trace. The set of Hermitian matrices in $%
\mathbb{M}_{n}$ is denoted by $\mathbb{H}_{n}$, $\mathbb{H}_{n}^{0}=$ $%
\mathbb{H}_{n}\backslash \{0\}$ and $\mathbb{H}_{n}^{1}$ is the set of rank
one matrices in $\mathbb{H}_{n}.$ Let $\mathbb{S}_{n}$ denote the unit
sphere in $\mathbb{H}_{n}$, let $\mathbb{S}(\mathbb{H}_{n}^{1})=\mathbb{S}%
_{n}\cap \mathbb{H}_{n}^{1}$ and let $\overline{\mathbb{H}}_{n}^{+}$ denote
the set of nonnegative definite Hermitian matrices.
We denote the $n\times n$ identity matrix by $\mathrm{I}_{n}$.

A random matrix $X$ in $\mathbb{H}_{n}$ is infinitely divisible if for all $%
m\geq1$ there exist independent identically distributed random matrices $%
X_{1},...,X_{m}$ in $\mathbb{H}_{n}$ such that $X_{1}+...+X_{m}$ and $X$
have the same matrix distribution. In this case, the matrix distribution of $%
X$ is characterized by the L\'{e}vy--Khintchine representation of its
Fourier transform $\mathbb{E}\mathrm{e}^{\mathrm{itr}(\Theta
X)}=\exp(\varphi (\Theta))$ with Laplace exponent
\begin{equation}
\varphi(\Theta)={}\mathrm{itr}(\Theta \Psi_{n}\text{ }){}-{}\frac{1}{2}%
\mathrm{tr}\left( \Theta \mathcal{A}_{n}\Theta \right) {}+{}\int _{\mathbb{H}%
_{n}}\left( \mathrm{e}^{\mathrm{itr}(\Theta \xi)}{}-1{}-\mathrm{i}\frac{%
\mathrm{tr}(\Theta \xi)}{1+\left \Vert \xi \right \Vert ^{2}}{}\right)
\nu_{n}(\mathrm{d}\xi),\ \Theta \in \mathbb{H}_{n},  \label{LKRgen}
\end{equation}
where $\mathcal{A}_{n}:\mathbb{H}_{n}\rightarrow \mathbb{H}_{n}$ is a linear
operator which is positive $(i.e.$ $\mathrm{tr}\left( \Phi \mathcal{A}%
_{n}\Phi \right) \geq0$ for $\Phi \in \mathbb{H}_{n})$ and symmetric $(i.e.$
$\mathrm{tr}\left( \Theta_{2}\mathcal{A}_{n}\Theta_{1}\right) =\mathrm{tr}%
\left( \Theta_{1}\mathcal{A}_{n}\Theta_{2}\right) $ for $\Theta_{1},%
\Theta_{2}\in \mathbb{H}_{n})$, $\nu_{n}$ is a measure on $\mathbb{H}_{n}$
(the L\'{e}vy measure) satisfying $\nu_{n}(\{0\})=0$ and $\int_{\mathbb{H}%
_{n}}(\left \Vert \xi \right \Vert ^{2}\wedge1)\nu_{n}(\mathrm{d}\xi)<\infty$%
, and $\Psi_{n}\in \mathbb{H}_{n}$. The triplet $(\mathcal{A}%
_{n},\Psi_{n},\nu_{n})$ is unique.

The following is straightforward.

\begin{proposition}
\label{polar} Fix $n\geq1$ and let $X_{n}$ be an infinitely divisible $%
n\times n$ \ Hermitian random matrix with L\'{e}vy--Khintchine
representation (\ref{LKRgen}) with L\'{e}vy triplet $(\mathcal{A}_{n},
\Psi_{n},\nu_{n})$, where

\noindent a) $\Psi_{n}=\mathcal{\gamma}\mathrm{I}_{n},\gamma \in \mathbb{R}.$

\noindent b) $\mathcal{A}_{n}\Theta=\frac{\sigma_{n}^{2}}{n}\Theta$, $\Theta
\in \mathbb{H}_{n}$, and $\sigma_{n}^{2}\geq0$.

\noindent c) $\nu_{n}(UEU^{\ast})=$ $\nu_{n}(E)$ for each unitary $n\times n$
nonrandom matrix $U$ and $E\in \mathcal{B}\left( \mathbb{H}_{n}^{0}\right) .$

Then the distribution of $X_{n}$ is invariant under unitary conjugation.
\end{proposition}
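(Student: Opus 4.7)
The plan is to verify that the characteristic function of $UX_nU^*$ agrees with that of $X_n$ for every deterministic unitary $U$, since uniqueness of the Fourier transform then yields equality in distribution. Using cyclicity of the trace, $\mathrm{tr}(\Theta UX_nU^\ast)=\mathrm{tr}(U^\ast\Theta U\,X_n)$, so
\begin{equation*}
\mathbb{E}\,\mathrm{e}^{\mathrm{i}\mathrm{tr}(\Theta UX_nU^\ast)}=\exp\bigl(\varphi(U^\ast\Theta U)\bigr),
\end{equation*}
and the task reduces to showing that $\varphi(U^\ast\Theta U)=\varphi(\Theta)$ for every unitary $U$ and every $\Theta\in\mathbb{H}_n$. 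I would then go term by term through the three summands in \eqref{LKRgen}.

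For the drift term, assumption (a) gives $\Psi_n=\gamma I_n$, hence $\mathrm{tr}(U^\ast\Theta U\,\Psi_n)=\gamma\,\mathrm{tr}(U^\ast\Theta U)=\gamma\,\mathrm{tr}(\Theta)=\mathrm{tr}(\Theta\Psi_n)$. For the Gaussian term, assumption (b) gives $\mathcal{A}_n(U^\ast\Theta U)=\tfrac{\sigma_n^2}{n}U^\ast\Theta U$, so
\begin{equation*}
\mathrm{tr}\bigl(U^\ast\Theta U\,\mathcal{A}_n(U^\ast\Theta U)\bigr)=\tfrac{\sigma_n^2}{n}\mathrm{tr}(U^\ast\Theta^2 U)=\tfrac{\sigma_n^2}{n}\mathrm{tr}(\Theta^2)=\mathrm{tr}(\Theta\mathcal{A}_n\Theta).
\end{equation*}

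The remaining step is the jump integral, which is the only place where the argument is non-trivial. The idea is to perform the change of variables $\eta=U\xi U^\ast$ inside the integral; since conjugation by $U$ preserves the Frobenius norm, the term $\|\xi\|^2=\|\eta\|^2$ is unchanged, and $\mathrm{tr}(U^\ast\Theta U\,\xi)=\mathrm{tr}(\Theta\,U\xi U^\ast)=\mathrm{tr}(\Theta\eta)$. The key point is that the map $T:\xi\mapsto U\xi U^\ast$ pushes $\nu_n$ forward to itself: indeed $T^{-1}(E)=U^\ast E U$, and applying assumption (c) with $U$ replaced by $U^\ast$ gives $\nu_n(U^\ast E U)=\nu_n(E)$, so $T_\ast\nu_n=\nu_n$. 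Therefore the integrand obtained after the substitution is exactly the original one with $\Theta$ in place of $U^\ast\Theta U$, and the third term is also invariant.

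Combining the three invariances yields $\varphi(U^\ast\Theta U)=\varphi(\Theta)$ for all unitary $U$ and all $\Theta\in\mathbb{H}_n$, so $X_n$ and $UX_nU^\ast$ have the same characteristic function, hence the same distribution. The only place where care is needed is the bookkeeping of the pushforward in the jump integral; once that is framed correctly, the proof is essentially a one-line verification per term, matching the paper's claim that the result is straightforward.
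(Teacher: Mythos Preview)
Your argument is correct and is precisely the verification the paper has in mind: the paper offers no proof beyond the remark ``The following is straightforward,'' and your term-by-term check that $\varphi(U^\ast\Theta U)=\varphi(\Theta)$ via cyclicity of the trace, unitary invariance of the Frobenius norm, and the pushforward invariance of $\nu_n$ is exactly the intended route.
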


\begin{definition}
An $n\times n$ matrix-valued process $\left \{ X(t) :t\geq0\right \} $ is a
Hermitian L\'{e}vy process if for each $t>0,$ $X(t)\in \mathbb{H}_{n}$ and

\noindent i) $X(0)=0$ with probability one,

\noindent ii) $X$ has independent increments: $\forall$ $0\leq t_{1}<\cdot
\cdot \cdot<t_{m},m\geq1,$ $X(t_{m})-X(t_{m-1}),...,X(t_{2})-X(t_{1})$ are
independent random matrices,

\noindent iii) $X$ has stationary increments: $\forall$ $0\leq s<t,$ $%
X(t)-X(s)$ and $X(t-s)$ have the same matrix distribution, and

\noindent iv) for any $s\geq0$, the increment $X(t+s)-X(s)\rightarrow
\mathbf{0}_{n}$ in distribution as $t\rightarrow0$, where $\mathbf{0}_{n}$
is the $n\times n$ zero matrix.
\end{definition}

A key feature of an $n\times n$ Hermitian L\'{e}vy process $X(t)$ with
triplet given by Proposition \ref{polar} is that for each $t>0$, the
distribution of $X(t)$ is invariant under unitary conjugation. Furthermore,
the nonzero jumps $\Delta X(t)=X(t)-X(t-)$ are random matrices of rank one.

Given any infinitely divisible $n\times n$ Hermitian random matrix $X$,
there is a Hermitian L\'{e}vy process $\left \{ X(t):t\geq0\right \} $ such
that $X$ and $X(1)$ have the same distribution, and vice versa. In fact, $%
X(t)$ has the Fourier transform $\mathbb{E}[e^{\text{\textrm{itr}}(\Theta
X(t))}]=\exp(t\varphi(\Theta))$, where $\varphi$ is the above Laplace
exponent.

Throughout this paper we will assume that $X(t)$ has an absolutely
continuous distribution for each $t>0$. In order for this condition to hold,
we will ask that $X$ have a Gaussian component $\left(
\sigma^{2}\neq0\right) $ or that it satisfies condition $\mathbf{D}$ in \cite%
{PV}. Under this assumption, for each $t>0$, $X(t)$ has a simple spectrum
\cite{PV}.

The following dynamics for the eigenvalues of a class of Hermitian L\'{e}vy
processes is proved in \cite{PV}.

\begin{proposition}
\label{PVeigen} Let $\left \{ X(t):t\geq0\right \} $ be an $n\times n$
Hermitian L\'{e}vy process with absolutely continuous distribution invariant
under unitary conjugation, and with triplet $(\sigma^{2}\mathrm{I}%
_{n}\otimes \mathrm{I}_{n}$,$\gamma \mathrm{I}_{n}$,$\nu)$. Let $(\lambda
_{1}(t),...,\lambda_{n}(t))$ be the vector of eigenvalues of $X(t)$ where $%
\lambda_{1}(t)>\lambda_{2}(t)>\cdots>\lambda_{n}(t)$ for each $t\geq0$. For
each $m=1,...,n,$ the eigenvalue $\lambda_{m}$ is a semimartingale and
\begin{align}
\lambda_{m}(X_{t}) & =\lambda_{m}(X_{0})+\gamma
\sum_{i=1}^{n}\int_{0}^{t}(D\lambda_{m}(X_{s-}))_{ii}ds+\sigma^{2}%
\int_{0}^{t}\sum_{j\not =m}\frac{1}{\lambda_{m}(X_{s-})-\lambda_{j}(X_{s-})}%
ds+M_{t}^{m}  \label{LFE} \\
& +\int_{(0,t]\times \mathbb{H}_{n}^{0}}\left[ \lambda_{m}(X_{s-}+y)-%
\lambda_{m}(X_{s-})-\mathrm{tr}(D\lambda_{m}(X_{s-})y)1_{\left \{ \left \Vert
y\right \Vert \leq1\right \} }\right] \nu(dy)ds,  \notag
\end{align}
 with
\begin{equation*}
M_{t}^{m}=\sigma
\sum_{r=1}^{n}\sum_{l=1}^{n}\int_{0}^{t}(D%
\lambda_{m}(X_{s-}))_{rl}dB_{s}^{rl}+\int_{(0,t]\times \mathbb{H}%
_{n}^{0}}\left[\lambda _{m}(X_{s-}+y)-\lambda_{m}(X_{s-})\right]\widetilde{J}%
_{X}(ds,dy),
\end{equation*}
where $J_{X}(\cdot,\cdot)$ is the Poisson random measure of the jumps of $X$
on $[0,\infty)\times \mathbb{H}_{n}^{0}$ with intensity measure $Leb\otimes
\nu $, independent of a family of independent one dimensional standard
Brownian motions $B_{s}^{ij},i,j=1,...,n$ and the compensated measure is
given by
\begin{equation*}
\widetilde{J}_{X}(\mathrm{d}t,\mathrm{d}y)=J_{X}(\mathrm{d}t,\mathrm{d}y)-%
\mathrm{d}t\nu(\mathrm{d}y)\text{;}
\end{equation*}
and {for each $s\geq0$}, $D\lambda_{m}(X_{s})$ is the matrix of derivatives
of $\lambda_{m}(X_{s})$ with respect to the entries of $X_{s}$, given by{\
\begin{equation}
\left( D\lambda_{m}(X_{s})\right) _{ij}=2\overline{u}_{im}(s)u_{jm}(s)1_{%
\{i<j\}}+|u_{im}(s)|^{2}1_{\{i=j\}}\text{,}  \label{ED}
\end{equation}
} where $u_{ij}(s)$ $i,j=1,2,...,n$ are the entries of a unitary random
matrix $U_s$ of eigenvectors of $X_s$.
\end{proposition}

\begin{remark}
\label{mart rep} If we take
\begin{equation*}
\widetilde{M}_{t}^{m}:=\sum_{r=1}^{n}\sum_{l=1}^{n}\int_{0}^{t}(D\lambda
_{m}(X_{s-}))_{rl}dB_{s}^{rl},
\end{equation*}
it is clear that for $m,m^{\prime}=1,\dots,n$ its covariation process is
given by
\begin{equation*}
\langle \widetilde{M}^{m},\widetilde{M}^{m^{\prime}}\rangle_{t}=t\delta
_{mm^{\prime}}\qquad \text{$t>0$.}
\end{equation*}
Therefore, by L\'{e}vy's Theorem, we can write, for $m=1,\dots,n$, the
martingale term $M^{m}$ as
\begin{equation*}
M_{t}^{m}=\sigma W_{t}^{m}+\int_{(0,t]\times \mathbb{H}_{n}^{0}}\left[\lambda
_{m}(X_{s-}+y)-\lambda_{m}(X_{s-})\right]\widetilde{J}_{X}(ds,dy),\qquad \text{for
$t>0$,}
\end{equation*}
where $W^{1},\dots,W^{n}$ are independent one dimensional standard Brownian
motions.
\end{remark}

\subsection{Free L\'{e}vy processes affiliated with $W^{\ast}$-probability
spaces}

We recall some facts on free L\'{e}vy processes acting on a $W^{\ast}$%
-probability space. For additional information on this subject, see \cite{A}%
, \cite{BNT06}, \cite{BV}, and \cite{Bi2}. A $W^{\ast}$-probability space is a pair $(%
\mathcal{G},\tau)$ where $\mathcal{G}\ $is a von Neumann algebra acting on a
Hilbert space $H$ and $\tau$ is a normal faithful trace on $\mathcal{G}$. In
the sequel, $(\mathcal{G},\tau)$ will denote a $W^{\ast}$-probability space.

An unbounded operator $a$ in $H$ is not an element of
$\mathcal{G}$. However, a selfadjoint linear operator $a$ in $H$ is
affiliated with $\mathcal{G}$ if and only if $f(a)\in \mathcal{G}$ for any
bounded Borel function $f:\mathbb{R\rightarrow R}$. Here $f(a)$ is defined
in the sense of spectral theory (the functional calculus). That is, for any
selfadjoint operator $a$ affiliated with $\mathcal{G}$, there exists a
unique probability measure $\mu_{a}$ on $\mathbb{R}$, concentrated on the
spectrum of $a$, such that
\begin{equation*}
\tau(f(a))=\int_{\mathbb{R}}f(s)\mu_{a}(ds)\text{,}
\end{equation*}
for every bounded Borel function $f:\mathbb{R\rightarrow R}$. The measure $%
\mu_{a}$ is called the (spectral) distribution of $a$ and is denoted by $%
\mu_{a}=\mathcal{L}\left \{ a\right \} $. Unless $a$ is bounded, the
spectrum of $a$ is an unbounded subset of $\mathbb{R}$ and, in general, $%
\mu_{a}$ is not compactly supported.

\begin{definition}
\label{FIaff} Let $a_{1},a_{2},...,a_{r}$ be selfadjoint operators
affiliated with a $W^{\ast}$-probability space $(\mathcal{G},\tau)$. It is
said that $a_{1},a_{2},...,a_{r}$ are freely independent with respect to $%
\tau$ if for any bounded Borel functions $f_{1},f_{2},...,f_{r}:\mathbb{%
R\rightarrow R}$, the bounded linear operators $%
f_{1}(a_{1}),f_{2}(a_{2}),...,f_{r}(a_{r})$ in $\mathcal{G}$ are freely
independent with respect to $\tau$. That is,
\begin{equation}
\tau \left \{ \lbrack f_{_{i_{1}}}(a_{i_{1}})-\tau(f_{i_{_{1}}}(a_{i_{1}}))]
\left[ f_{i_{2}}(a_{i_{2}})-\tau \left( f_{i_{2}}(a_{i_{2}})\right) \right]
\cdots \left[ f_{_{i_{m}}}(a_{i_{m}})-\tau \left(
f_{_{i_{m}}}(a_{i_{m}})\right) \right] \right \} =0  \label{Wfreeness}
\end{equation}
for any positive integer $m$ and any $i_{1},i_{2},...,i_{m}$ in $\left \{
1,2,...,r\right \} $ with $i_{1}\neq i_{2},i_{2}\neq i_{3}...,i_{m-1}\neq
i_{m}$.
\end{definition}

{A stochastic process affiliated with a $W^{\ast}$-probability space $(%
\mathcal{G},\tau)$ is a family $\left \{ Z_{t}:t\geq0\right \} $ of
selfadjoint operators affiliated with $\mathcal{G}$}. Let us denote by $%
\mu_{t}=\mathcal{L}\left \{ Z_{t}\right \} $ the (spectral) distribution of $%
Z_{t}$ for each $t\geq0$. The family $\left \{ \mu_{t}:t\geq0\right \} $ of
probability measures on $\mathbb{R}$ is called the family of spectral
distributions of the process $\left \{ Z_{t}:t\geq0\right \} $. Moreover,
for any $s\geq0,t\geq0$ such that $s\leq t$, the increment $Z_{t}-Z_{s}$ is
again a selfadjoint operator affiliated with $\mathcal{G}$ and we denote its
distribution by $\mu_{s,t}=\mathcal{L}\left \{ Z_{t}-Z_{s}\right \} .$

\begin{definition}
\label{FLP} A free L\'{e}vy process is a stochastic process $\left \{
Z_{t}:t\geq0\right \} $ affiliated with the $W^{\ast}$-probability space $(%
\mathcal{G},\tau)$\ such that:

$i)$ $Z_{0}=0$.

$ii)$ For any $m\geq1$ and $0\leq t_{1}<\cdots<t_{m}$, the increments%
\begin{equation*}
Z_{t_{m}}-Z_{t_{m-1}},...,Z_{t_{2}}-Z_{t_{1}}
\end{equation*}
are freely independent random variables.

$iii)$ For any $s\geq0,t\geq0$ the spectral distribution of $Z_{t+s}-Z_{s}$
does not depend on $s$.

$iv)$ For any $s\geq0$, the increment $Z_{t+s}-Z_{s}\rightarrow0$ in
distribution as $t\rightarrow0$, that is, the spectral distributions $%
\mathcal{L}\left \{ Z_{t+s}-Z_{s}\right \} $ converge weakly to $\delta_{0}$
as $t\rightarrow0$.
\end{definition}

It is well known that the law $\upsilon=\mathcal{L}(Z_{1})$ of a free L\'{e}%
vy process $\left \{ Z_{t}:t\geq0\right \} $ is free infinitely divisible.
Moreover, it has the L\'{e}vy--Khintchine representation $%
\phi_{Z_{t}}(z)=t\phi_{\upsilon}(z)$ in terms of the {Voiculescu} transform
\begin{equation}
\phi_{\upsilon}(z)=\eta+\int_{\mathbb{R}}\frac{1+tz}{z-t}\rho(\mathrm{d}%
t),\quad(z\in \mathbb{C}^{+}),  \label{LKpair}
\end{equation}
with generating pair $(\eta,\rho)$, where $\eta \in \mathbb{R}$ and $\rho$
is a finite measure on $\mathbb{R}$, see \cite{BNT06}, \cite{BV}, \cite{VDN}.

{Finally, let $ID(\ast)$ and $ID(\boxplus)$ be the set of classical and
free infinitely divisible distributions on $\mathbb{R}$, respectively.
The Bercovici--Pata bijection \cite{BP}, denoted by $\Lambda$, between $ID(\ast)$ and $ID(\boxplus)$, 
is such that for each $\mu \in ID(\ast)$ with L\'{e}vy triplet $%
(\sigma^{2},\gamma ,\nu)$, $\Lambda(\mu)\in ID(\boxplus)$ has generating pair%
}
\begin{align}
\rho(\mathrm{d}x) & =\sigma^{2}\delta_{0}(\mathrm{d}x)+\frac{x^{2}}{1+x^{2}}%
\nu(\mathrm{d}x),  \label{T1} \\
\eta & =\gamma-\int_{\mathbb{R}}x\left( 1_{\left[ -1,1\right] }(x)-\frac{1}{%
1+x^{2}}\right) \nu(\mathrm{d}x),  \label{T2}
\end{align}
see \cite{BNT04}, \cite{BNT06}.

Benaych--Georges \cite{BG} and Cabanal--Duvillard
\cite{ca} explained the bijection $\Lambda$ via random matrix models. Their work
constitutes a generalization of the Wigner semicircle law for the GUE to more
general random matrices. The distributions of these random matrices share
similar properties to those of the GUE, such as having an infinitely divisible
matrix distribution which is invariant under unitary conjugation
(\textit{L\'{e}vy unitary ensemble}).

\section{The approximating Hermitian L\'{e}vy processes}\label{sectionHL}

In this section we introduce the ensemble of Hermitian 
valued processes considered in this paper. Let $\left \{ Z_{t}:t\geq0\right \} $ be
a free L\'{e}vy process with generating pair $(\eta,\rho)$ and let $%
\sigma^{2}=\rho(\left \{ 0\right \} )$. In a fixed probability space $(\Omega,\mathcal{F},\mathbb{P})$ we construct a sequence of $%
n\times n$ Hermitian L\'{e}vy processes $\{X^{(n)}\}_{n\geq1}=\left \{
X_{t}^{(n)}:t>0\right \}_{n\geq 1} $, such that for each $n\geq 1$, the generating triplet $\left( \frac{%
\sigma_{n}^{2}}{n}\mathrm{I}_{n}\otimes \mathrm{I}_{n},\gamma \mathrm{I}%
_{n},\nu_{n}\right) $ is given by

\begin{enumerate}
\item[a)]
\begin{equation}
\sigma_{n}^{2}=\sigma^{2}+\frac{n-1}{n^{2}}.  \label{sigman}
\end{equation}

\item[b)]
\begin{equation}
\gamma= \eta+\int_{|r|\leq1}r\rho(dr)-\int_{|r|>1}\frac{1}{r}\rho(dr) .
\label{gamma}
\end{equation}

\item[c)] The L\'{e}vy measure $\nu_{n}$ is as follows
\begin{equation}
\nu_{n}\left( E\right) =\int_{\mathbb{S}(\mathbb{H}_{n}^{1})}\int
_{\mathbb{R}}1_{E}\left( r\xi \right) n\rho_{n}^{\alpha}\left( \mathrm{d}%
r\right) \pi_{n}\left( \mathrm{d}\xi \right) \text{,\quad}E\in \mathcal{B}%
\left( \mathbb{H}_{n}\backslash \left \{ 0\right \} \right) \text{,}
\label{PDBGCD}
\end{equation}
where:

\begin{enumerate}
\item[i)]  $\pi_{n}$ is a distribution on $\mathbb{S}(%
\mathbb{H}_{n}^{1})$ satisfying%
\begin{equation}
\int \limits_{\mathbb{S}(\mathbb{H}_{n}^{1})}1_{B}\left( \xi \right) \pi
_{n}\left( \mathrm{d}\xi \right) =\int \limits_{\mathbb{S}(\mathbb{C}%
_{n})}1_{\phi_{n}^{-1}\left( B\right) }\left( u\right) \pi \left( \mathrm{d}%
u\right) \text{,\quad}B\in \mathcal{B}\left( \mathbb{S}(\mathbb{H}%
_{n}^{1})\right),  \label{spheres}
\end{equation}
where $\phi_{n}$ denotes the transformation $u\rightarrow uu^{\ast}$ and $\pi$
is the Haar distribution of a random vector in $\mathbb{S}(\mathbb{C}_{n})$,
the unit sphere of $\mathbb{C}_{n}$.

\item[ii)] $\rho_{n}^{\alpha}$ is a measure defined on $\mathbb{R}$ for
	each $n\geq1$ and $\alpha \in(0,1/4)$ by
	\begin{equation}\label{lm}
	\rho_{n}^{\alpha}(dr)=\frac{1+r^{2}}{r^{2}}\rho(dr)
	1_{(1/n,n^{2\alpha}/(n^{\alpha}-1))}(|r|),
	\end{equation}
	where the above expression is understood in the limiting sense when $n=1$.
	Note that $\int \nolimits_{-\infty}^{\infty}(1\wedge r^{2})\rho_{n}^{\alpha
	}(dr)<\infty$.
\end{enumerate}
\end{enumerate}

\begin{remark}
\label{mainR2} i) $X_{t}^{(n)}$ has an absolutely continuous distribution,
for each $t>0$, $n>1$, with respect to the Lebesgue measure on $\mathbb{R}^{2n}$, since $\sigma_{n}^{2}$
in (\ref{sigman}) is non-zero.

ii) The spectrum of $X_{t}^{(n)}$ is simple for each $t>0$, $n>1$, by
the absolute continuity, see \cite{PV}.

iii) $X_{t}^{(n)}$ has a unitary invariant distribution, for each $t>0$, $%
n\geq1$, since the assumptions of Proposition \ref{polar} are satisfied.
This follows since the spherical measure $\pi_{n}$ is a multiple of the Haar
distribution and $\rho_{n}^{\alpha}$ does not depend on $\xi \in \mathbb{S}(%
\mathbb{H}_{n}^{1})$.

iv) The sequence of Hermitian matrices $\left \{ X^{(n)}\right \} _{n\geq1}$
is a L\'{e}vy unitary ensemble.

v) The non-zero jumps of $X^{(n)}$ are of rank one, i.e.,
if $\Delta X^{(n)}_{t}=X^{(n)}_{t}-X^{(n)}_{t-}\neq0$ then
$\Delta X^{(n)}_{t} \in \mathbb{H}_{n}^{1}$.
This follows since the spherical measure $\pi_{n}$ is concentrated on $\mathbb{S}(\mathbb{H}%
_{n}^{1}).$

vi) For fixed $n>1$ and $t>0$, our proposed models differ from the random matrix models given in \cite{BG}, \cite{ca} in the sense that the distribution of $X^{(n)}_t$ always satisfy i) and ii). This is essential in our proof of the functional convergence.
\end{remark}

\textit{Example}. When $Z$ is the free Poisson process, its generating pair is given by: $\eta=-\lambda
$, and $\rho=\lambda \delta_{1}$, with $\lambda>0.$ Then, for each $n>1$%
\begin{equation*}
X_{t}^{(n)}=\sum_{j=1}^{N_{t}}u_{j}u_{j}^{\ast}+\frac{n-1}{n^{2}}B_{t}%
\mathrm{I}_{n}
\end{equation*}
where $\left \{ u_{j}\right \} _{j\geq1}$ is a sequence of independent
uniformly distributed random vectors in $\mathbb{C}_{n}$, $N=\left \{
N_{t}\right \} _{t\geq0}$ is a Poisson process of parameter $\lambda$
independent of $\left \{ u_{j}\right \} _{j\geq1}$ and $B=\left \{
B_{t}\right \} _{t\geq0}$ is a one-dimensional standard Brownian motion
independent of $N$ and $\left \{ u_{j}\right \} _{j\geq1}.$ Then Theorem 1
gives a functional version of the Marchenko--Pastur theorem, in which the
asymptotic noncommutative process is the free Poisson process. We point out
that the functional version in \cite{CG} gives as an asymptotic process the
dilation of the free Poisson distribution, which is not a free L\'{e}vy
process.

\section{The dynamics of the eigenvalues and the measure valued processes}\label{section_semi}

In this section we establish some results needed
later on for the sequence of semimartingales corresponding to the eigenvalues and the
spectral measure valued processes of the ensemble of the Hermitian L\'{e}vy
processes $\left \{ X_{t}^{(n)}:t\geq0\right \} _{n\geq1}$ defined in Section 3.

 Given $A, B \in \mathbb{H}_{n}$ we denote by $\mathcal{D}f(A)(B)$ the G\^ateaux derivative of $f$ at $A$ in the direction $B$, that is
\begin{align*}
 \mathcal{D}f(A)(B)=\frac{d}{dt}\Big|_{t=0}f(A+tB).
\end{align*}
Similarly, we denote by $\mathcal{D}f(A)(B^2)$ to the second G\^ateaux derivative of $f$ at $A$ in the direction $B$ i.e.
\begin{align*}
 \mathcal{D}f(A)(B^2)=\frac{\partial^2}{\partial t^2}\Big|_{t=0}f(A+tB).
\end{align*}
We refer to Chapter V.3 in \cite{BH} and Chapter 6.6 in \cite{HJ} for a proper definition of the G\^ateaux 
derivative of a function on the set of Hermitian matrices. We first prepare the following lemma.

	\begin{lemma}\label{NI}
		For $s\geq0$ and a continuously differentiable function $f:\mathbb{R}\to\mathbb{R}$,
		\begin{align*}
			\int_{\mathbb{S}(\mathbb{C}_{n})}\sum_{m=1}^{n}\Big [f^{\prime}(\lambda_{m}^{(n)}(X_{s-}^{(n)}))&\mathrm{tr}
			(D\lambda_{m}^{(n)}(X_{s-}^{(n)})vv^{\ast})\Big ]\pi(dv)=\int_{\mathbb{S}(\mathbb{C}_{n})}\mathrm{tr}\left(\mathcal{D}f(X_{s-})(vv^*)\right)\pi(dv).
		\end{align*}
	\end{lemma}
	\begin{proof}
		We first observe that using (\ref{ED}) and Proposition 4.2.3 in \cite{hp},
		\begin{align}\label{aux_n1_2}
			\int_{\mathbb{S}(\mathbb{C}_{n})}\sum_{m=1}^{n}\Big [f^{\prime}(\lambda_{m}^{(n)}(X_{s-}^{(n)}))&\text{tr}
			(D\lambda_{m}^{(n)}(X_{s-}^{(n)})vv^{\ast})\Big ]\pi(dv)\notag\\
			&=\int_{\mathbb{S}(\mathbb{C}_{n})}\sum_{m=1}^{n}\Big [f^{\prime}(\lambda_{m}^{(n)}(X_{s-}^{(n)}))\sum_{i=1}^n\sum_{k=1}^n(D\lambda_{m}^{(n)}(X_{s-}^{(n)}))_{ik}v_k(s-)\overline{v}_i(s-)\Big ]\pi(dv)\notag\\
			&=\frac{1}{n}\sum_{m=1}^{n}\Big [f^{\prime}(\lambda_{m}^{(n)}(X_{s-}^{(n)}))\sum_{i=1}^n(D\lambda_{m}^{(n)}(X_{s-}^{(n)}))_{ii}\Big ]\notag\\
			&=\frac{1}{n}\sum_{m=1}^{n}\Big [f^{\prime}(\lambda_{m}^{(n)}(X_{s-}^{(n)}))\sum_{i=1}^n|u_{im}(s-)|^2\Big ]\notag\\
			&=\frac{1}{n}\sum_{m=1}^{n}f^{\prime}(\lambda_{m}^{(n)}(X_{s-}^{(n)})).
		\end{align}		
		Next, using identity (V.13) in \cite{BH} (see also Theorem 6.6.30(1) in \cite{HJ}), we obtain
		\begin{align*}
			\mathrm{tr}\left(\mathcal{D}f(X_{s-})(vv^*)\right)=\mathrm{tr}\left(U_{s-}[f^{[1]}(\Lambda_{s-})\circ(U^*_{s-}vv^*U_{s-})]U^*_{s-}\right),
		\end{align*}
		where $\circ$ denotes the Schur-product, and $f^{[1]}(\Lambda_{s-})$ is the matrix with entries given by
		\begin{equation*}
			(f^{[1]}(\Lambda_{s-}))_{ij}=
			\begin{cases} \displaystyle \frac{f(\lambda_{i}^{(n)}(X_{s-}^{(n)}))-f(\lambda_{j}^{(n)}(X_{s-}^{(n)}))}{\lambda_{i}^{(n)}(X_{s-}^{(n)})-\lambda_{j}^{(n)}(X_{s-}^{(n)})} &\mbox{if } i\not=j\\
				f^{\prime}(\lambda_{i}^{(n)}(X_{s-}^{(n)})) & \mbox{if } i=j. \end{cases}
		\end{equation*}
		Since $U^*_{s-}v$ and $v$ have the same distribution under $\pi$
		\begin{align}\label{aux_n1_3}
			 \int_{\mathbb{S}(\mathbb{C}_{n})}\mathrm{tr}\left(\mathcal{D}f(X_{s-})(vv^*)\right)\pi(dv)&=\int_{\mathbb{S}(\mathbb{C}_{n})}\mathrm{tr}\left(f^{[1]}(\Lambda)\circ(vv^*)\right)\pi(dv)\notag\\
			&=\int_{\mathbb{S}(\mathbb{C}_{n})}\sum_{m=1}^nf^{\prime}(\lambda_{m}^{(n)}(X_{s-}^{(n)})|v_m|^2\pi(dv)\notag\\
			&=\frac{1}{n}\sum_{m=1}^{n}f^{\prime}(\lambda_{m}^{(n)}(X_{s-}^{(n)})).
		\end{align}
		The result now follows from \eqref{aux_n1_2} and \eqref{aux_n1_3}.
	\end{proof}
	Throughout this paper we will use the notation
\begin{equation*}
	\langle \mu,f\rangle:=\int_{\mathbb{R}}f(x)\mu(dx),
\end{equation*}
for any bounded measurable function $f$ and $\mu \in \mathrm{Pr}(\mathbb{R})$. We denote the set of functions $f:\mathbb{R}\to\mathbb{R}$ that are $k$ times continuously differentiable with bounded derivatives by $\mathcal{C}^{k}_{b}(\mathbb{R})$, for $k=1,2$.

\begin{proposition}\label{IE} Let $\left \{ \mu_{t}^{(n)}:t\geq0\right \}_{n\geq1}$
be the spectral measure-valued process (%
\ref{mvp}) of $\left \{ X_{t}^{(n)}:t\geq0\right \} _{n\geq1}.$
Then, for each $n\geq1$, $t\geq0$, and $f\in \mathcal{C}^{2}_{b}(\mathbb{R})$
\begin{align}
	& \langle \mu_{t}^{(n)},f\rangle=\langle \mu_{0}^{n},f\rangle+M_{t}^{n,f}+{\frac{\sigma
			_{n}^{2}}{2}}\int_{0}^{t}\int_{\mathbb{R}^{2}}\frac{f^{\prime}(x)-f^{\prime
		}(y)}{x-y}\mu_{s}^{n}(dx)\mu_{s}^{n}(dy)ds+{\gamma}\int_{0}^{t}\int _{%
		\mathbb{R}}f^{\prime}(x)\mu_{s}^{n}(dx)ds  \notag \\
	& +\int_{0}^{t}\int_{\mathbb{S}(\mathbb{C}_{n})}\int _{\mathbb{%
					R}}\mathrm{tr}\Big [f(X_{s-}^{(n)}+rvv^{\ast})-f(X_{s-}^{(n)})-\mathcal{D}f(X_{s-})(rvv^*)1_{\{|r|\leq1\}}\Big ]\rho
			_{n}^{\alpha}(dr)\pi(dv)ds.  \label{SDEaa}
\end{align}
\end{proposition}

\begin{proof}
For each $n\geq1$, and $t\geq0$ let $\lambda_{1}^{(n)}(t)>\cdots>\lambda_{n}^{(n)}(t)$
denote the eigenvalues of $X^{(n)}_{t}$. Then, by Proposition \ref{PVeigen},
\begin{align}
\lambda_{m}^{(n)}&(X_{t}^{(n)})  =\lambda_{m}(X_{0}^{(n)})+\gamma \sum
_{i=1}^{n}\int_{0}^{t}(D\lambda_{m}^{(n)}(X_{s-}^{(n)}))_{ii}ds+\frac {%
\sigma_{n}^{2}}{n}\int_{0}^{t}\sum_{j\not =m}\frac{1}{(\lambda_{m}^{(n)}-%
\lambda_{j}^{(n)})(s-)}ds+M_{t}^{n,m}  \notag
\label{SDE} \\
& +\int_{0}^{t}\int_{\mathbb{S}(\mathbb{H}_{n}^{1})}\int_{\mathbb{R}}\left[
\lambda_{m}^{(n)}(X_{s-}^{(n)}+ry)-\lambda_{m}^{(n)}(X_{s-}^{(n)})-\mathrm{tr}%
(D\lambda_{m}^{(n)}(X_{s-}^{(n)})ry\right] 1_{\{|r|\leq1\}}) n\rho_{n}^{%
\alpha}(dr)\pi_{n}(dy)ds,
\end{align}
where the process $(M_{t}^{n,m})_{t\geq0}$ is a martingale.

From (\ref{SDE})
and an application of It\^{o}'s formula we obtain that
\begin{align}
&\langle  \mu_{t}^{(n)},f\rangle=\langle \mu_{0}^{n},f\rangle+	M_{t}^{n,f}+\frac{\sigma_{n}^{2}}{n^{2}}%
\sum_{m=1}^{n}\int_{0}^{t}f^{\prime}(\lambda_{m}^{(n)}(X_{s-}^{(n)}))\sum_{j%
\not =m}\frac{1}{(\lambda_{m}^{(n)}-\lambda_{j}^{(n)})(s-)}ds \notag
\label{ito3} \\
& +\frac{\gamma}{n}\sum_{m=1}^{n}\sum_{i=1}^{n}\int_{0}^{t}f^{%
\prime}(\lambda_{m}^{(n)}(X_{s-}^{(n)}))(D%
\lambda_{m}^{(n)}(X_{s-}^{(n)}))_{ii}ds+\frac{1}{2n}\sum_{m=1}^{n}%
\int_{0}^{t}f^{\prime \prime }(\lambda_{m}^{(n)}(X_{s-}^{(n)}))d\langle
M^{n,m},M^{n,m}\rangle_{s}^{c}  \notag \\
& +\sum_{m=1}^{n}\int_{0}^{t}f^{\prime}(%
\lambda_{m}^{(n)}(X_{s-}^{(n)}))\int_{\mathbb{S}(\mathbb{H}_{n}^{1})}\int_{%
\mathbb{R}}\Big[ \lambda_{m}^{(n)}(X_{s-}^{(n)}+ry)-%
\lambda_{m}^{(n)}(X_{s-}^{(n)})-\mathrm{tr}(D%
\lambda_{m}^{(n)}(X_{s-}^{(n)})ry)1_{\{|r|\leq1\}}\Big]\rho_{n}^{%
\alpha}(dr)\pi_{n}(dy)ds  \notag \\
& +\sum_{m=1}^{n}\int_{0}^{t}\int_{\mathbb{S}(\mathbb{H}%
_{n}^{1})}\int_{\mathbb{R}}\left[ f(\lambda_{m}^{(n)}(X_{s-}^{(n)}+ry))-f(%
\lambda
_{m}^{(n)}(X_{s-}^{(n)}))-f^{\prime}(\lambda_{m}^{(n)}(X_{s-}^{(n)}))\Delta
\lambda_{m}^{(n)}(X_{s}^{(n)})\right]\rho_{n}^{\alpha}(dr)\pi _{n}(dy)ds,
\end{align}
where $(M_{t}^{n,f})_{t\geq0}$ is a local martingale, given by
\begin{align}\label{mart}
	M_{t}^{n,f} & ={\frac{\sigma_{n}}{n^{3/2}}\sum_{m=1}^{n}\int_{0}^{t}f^{\prime}(\lambda_{m}^{(n)}(X_{s-}^{(n)}))dW_{s}^{m}}  +	 \frac{1}{n}\int_{0}^{t}\int_{\mathbb{S}(\mathbb{H}_{n}^{1})}\int_{\mathbb{R}}\mathrm{tr}\Big [f(X_{s-}^{(n)}+ry)-f(X_{s-}^{(n)})\Big ]
	\widetilde {J}_{X}(ds,dr,dy),
\end{align}
and $W^{m},m=1,...,n$ are independent one-dimensional Brownian motions.
\par Following Remark \ref{mart rep} it is easy to check that
\begin{equation*}
\frac{1}{2n}\sum_{m=1}^{n}\int_{0}^{t}f^{\prime
\prime}(\lambda_{m}^{(n)}(X_{s-}^{(n)})){d\langle
M^{n,m},M^{n,m}\rangle_{s}^{c}}={\frac {\sigma_{n}^{2}}{2n^{2}}}%
\sum_{m=1}^{n}\int_{0}^{t}f^{\prime
\prime}(\lambda_{m}^{(n)}(X_{s-}^{(n)}))ds,
\end{equation*}
and therefore{\
\begin{align}
\frac{\sigma_{n}^{2}}{n^{2}}\sum_{m=1}^{n} &
\int_{0}^{t}f^{\prime}(\lambda_{m}^{(n)}(X_{s-}^{(n)}))\sum_{j\not =m}\frac{1%
}{(\lambda_{m}^{(n)}-\lambda_{j}^{(n)})(s-)}ds+\frac{1}{2n}%
\sum_{m=1}^{n}\int_{0}^{t}f ^{\prime
\prime}(\lambda_{m}^{(n)}(X_{s-}^{(n)}))d\langle
M^{n,m},M^{n,m}\rangle_{s}^{c}  \notag \\
& =\frac{\sigma_{n}^{2}}{2}\int_{0}^{t}\int_{\mathbb{R}^{2}}\frac{f^{\prime
}(x)-f^{\prime}(y)}{x-y}\mu_{s}^{n}(dx)\mu_{s}^{n}(dy)ds.  \notag
\end{align}
}  The drift term in (\ref{ito3}) is expressed, using (\ref{ED}), as
{\
\begin{align*}
\frac{\gamma}{n}\sum_{m=1}^{n}\sum_{i=1}^{n}\int_{0}^{t}f^{\prime}(\lambda
_{m}^{(n)}(X_{s-}^{(n)}))(D\lambda_{m}^{(n)}(s-))_{ii}ds & =\frac{\gamma}{n}%
\sum_{m=1}^{n}\sum_{i=1}^{n}\int_{0}^{t}f^{\prime}(%
\lambda_{m}^{(n)}(X_{s-}^{(n)}))|\bar{u}_{im}^{(n)}u_{im}^{(n)}|ds \\
& =\frac{\gamma}{n}\sum_{m=1}^{n}\int_{0}^{t}f^{\prime}(%
\lambda_{m}^{(n)}(X_{s-}^{(n)}))ds=\gamma \int_{0}^{t}\int_{\mathbb{R}%
}f^{\prime}(x)\mu_{s}^{(n)}(dx)ds.
\end{align*}
} The last two terms in (\ref{ito3}) can be written, using (\ref{PDBGCD}) and (\ref{spheres}), as
\begin{align*}
\sum_{m=1}^{n}\int_{0}^{t}\int_{\mathbb{S}(\mathbb{C}_{n})}\int_{\mathbb{R}}%
\Bigg[f(\lambda_{m}^{(n)} &
(X_{s-}^{(n)}+rvv^{\ast}))-f(\lambda_{m}^{(n)}(X_{s-}^{(n)})) \\
& -f^{\prime}(\lambda_{m}^{(n)}(X_{s-}^{(n)}))\mathrm{tr}(D%
\lambda_{m}^{(n)}(X_{s-}^{(n)})rvv^{\ast})1_{\{|r|\leq1\}}\Bigg]%
\rho_{n}^{\alpha}(dr)\pi(dv)ds.
\end{align*}
Thus (\ref{ito3}) is expressed as
\begin{align}
& \langle \mu_{t}^{(n)},f\rangle=\langle \mu_{0}^{n},f\rangle+M_{t}^{n,f}+{\frac{\sigma
_{n}^{2}}{2}}\int_{0}^{t}\int_{\mathbb{R}^{2}}\frac{f^{\prime}(x)-f^{\prime
}(y)}{x-y}\mu_{s}^{n}(dx)\mu_{s}^{n}(dy)ds+{\gamma}\int_{0}^{t}\int _{%
\mathbb{R}}f^{\prime}(x)\mu_{s}^{n}(dx)ds  \notag \\
& +\int_{0}^{t}\int_{\mathbb{S}(\mathbb{C}_{n})}\int _{\mathbb{%
R}}\sum_{m=1}^{n}\Big [f(\lambda_{m}^{(n)}(X_{s-}^{(n)}+rvv^{\ast}))-f(\lambda
_{m}^{(n)}(X_{s-}^{(n)}))  \notag \\
& \hspace{6cm}-f^{\prime}(\lambda_{m}^{(n)}(X_{s-}^{(n)}))\mathrm{tr}%
(D\lambda_{m}^{(n)}(X_{s-}^{(n)})rvv^{\ast})1_{\{|r|\leq1\}}\Big ]\rho
_{n}^{\alpha}(dr)\pi(dv)ds.  \label{SDEa}
\end{align}
Using the fact that the last term in \eqref{SDEa} is integrable, we obtain (\ref{SDEaa}) by an application of Fubini's theorem together with Lemma \ref{NI}.
\end{proof}

In the rest of this section we will study the convergence of the martingale term, to this end we need the following auxiliary result.
\begin{lemma}\label{der_bound}
Let $A\in\mathbb{H}_n$. 

(i) For any $f\in \mathcal{C}^{1}_{b}(\mathbb{R})$,
    \begin{align*}
		\int_{\mathbb{S}(\mathbb{C}_{n})}\left[\mathrm{tr}\left(\mathcal{D}f(A)(vv^{\ast})\right)\right]^2\pi(dv)\leq \|f'\|_{\infty}^2.
	\end{align*}

(ii) For any $f\in \mathcal{C}^{2}_{b}(\mathbb{R})$,
	\begin{align*}
		\int_{\mathbb{S}(\mathbb{C}_{n})}|\mathrm{tr}\left(\mathcal{D}^2f(A)\left((vv^{\ast})^2\right)\right)|\pi(dv)\leq 2\|f''\|_{\infty},
	\end{align*}
and
\begin{align*}
	|\mathrm{tr}\left(\mathcal{D}^2f(A)\left((n^{-1}\mathrm{I_n})^2\right)\right)|\leq 2\|f''\|_{\infty}.
\end{align*}
	\end{lemma}
\begin{proof}
	(i) Let $A=U\Lambda U^*$, where $\Lambda$ is a diagonal matrix, then by identity (V.13) in \cite{BH} (see also Theorem 6.6.30(1) in \cite{HJ})
together with the fact that
	$U^*U=I$
	\begin{align}\label{iden_fd}
		 \mathrm{tr}\left(\mathcal{D}f(A)(vv^{\ast})\right)=\mathrm{tr}\left(U\left[f^{[1]}(\Lambda)\circ(U^*vv^{\ast}U)\right]U^*\right)=\mathrm{tr}\left(\left[f^{[1]}(\Lambda) \circ(U^*vv^{\ast}U)\right]\right),
	\end{align}
	where $\circ$ denotes the Schur product. Therefore using that $U^*v$
	and $v$ have the same distribution under $\pi$
	\begin{align*}
		 \int_{\mathbb{S}(\mathbb{C}_{n})}\left[\mathrm{tr}\left(\mathcal{D}f(A)(vv^{\ast})\right)\right]^2\pi(dv)=\int_{\mathbb{S}(\mathbb{C}_{n})}\left[\mathrm{tr}\left(f^{[1]}(\Lambda)\circ(vv^{\ast})\right)\right]^2\pi(dv)\leq \|f'\|_{\infty}^2.
	\end{align*}
	 (ii) To obtain the second identity we use Exercise V.3.9 in \cite{BH} (see also Theorem 6.6.30(2) in \cite{HJ})
to note that for a matrix $B\in\mathbb{H}_n$
	\begin{align}\label{seg_der_exp}
	 \mathcal{D}^2f(\Lambda)(B^2)=2\sum_{i,j,k}f^{[2]}(\lambda_i,\lambda_j,\lambda_k)P_iBP_jBP_k,
	\end{align}
  where $P_{i}$ are the projections onto the coordinate axes, $f^{[2]}(\lambda_i, \lambda_j, \lambda_k)$=$(f^{[1]}(\lambda_i, \lambda_j)-f^{[1]}(\lambda_i, \lambda_k))/(\lambda_j-\lambda_k)$ if $\lambda_i, \lambda_j, \lambda_k$ are distinct, and defined by continuity otherwise. By a straightforward computation we have $(P_iBP_jBP_k)_{lm}=(B)_{ij}(B)_{jk}1_{\{l=i,m=k\}}$ for $l,m=1,\dots,n$, this implies that
	\begin{align}\label{trace}
		\mathrm{tr}(P_iBP_jBP_k)=(B)_{ij}(B)_{ji}1_{\{k=i\}}.
	\end{align}
	Therefore, using that $U^*v$
	and $v$ have the same distribution under $\pi$
	\begin{align*}
		 \int_{\mathbb{S}(\mathbb{C}_{n})}|\mathrm{tr}\left(\mathcal{D}^2f(A)\left((vv^{\ast})^2\right)\right)|\pi(dv)&=\int_{\mathbb{S}(\mathbb{C}_{n})}|\mathrm{tr}\left(U\left[\mathcal{D}^2f(\Lambda)\left((U^*vv^{\ast}U)^2\right)\right]U^*\right)|\pi(dv)\\
		 &=2\int_{\mathbb{S}(\mathbb{C}_{n})}\left|\sum_{i,j}f^{[2]}(\lambda_i,\lambda_j,\lambda_i)(vv^{\ast})_{ij}(vv^{\ast})_{ji}\right|\pi(dv)\\&\leq2\|f''\|_{\infty}\int_{\mathbb{S}(\mathbb{C}_{n})}\sum_{i,j}|v_i|^2|v_j|^2\pi(dv)=2\|f''\|_{\infty}.
	\end{align*}
For the remaining identity we note that using \eqref{seg_der_exp} together with \eqref{trace} we obtain
\begin{align*}
|\mathrm{tr}\left(\mathcal{D}^2f(A)\left((n^{-1}\mathrm{I_n})^2\right)\right)|\leq\frac{2}{n}\sum_{i}|f^{[2]}(\lambda_i,\lambda_j,\lambda_i)|\leq 2\|f''\|_{\infty}.
\end{align*}
\end{proof}

\begin{lemma}
\label{martl} For any $f\in\mathcal{C}^2_b(\mathbb{R})$,
the martingale $(M_{t}^{n,f})_{t\geq0}$ in (\ref{mart}) satisfies
\begin{equation*}
\lim_{n\rightarrow \infty}\sup_{0\leq t\leq T}|M_{t}^{n,f}|=0\qquad \text{in
probability,}
\end{equation*}
for any $T>0$.
\end{lemma}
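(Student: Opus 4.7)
The plan is to decompose $M^{n,f}=M^{n,f,c}+M^{n,f,d}$ into its continuous Brownian part and its compensated Poisson jump part, bound the predictable quadratic variation of each piece in $L^1$, and conclude via Doob's $L^2$-maximal inequality. Since the two pieces are orthogonal martingales, it suffices to show that $\mathbb{E}[\langle M^{n,f,c}\rangle_T]$ and $\mathbb{E}[\langle M^{n,f,d}\rangle_T]$ both tend to $0$; then $\sup_{t\le T}|M^{n,f}_t|\to 0$ in $L^2$ and therefore in probability. I would carry out the argument first for $f\in\mathcal{C}^2_b(\mathbb{R})$, the class used to derive (\ref{mart}), recovering general bounded continuous $f$ afterwards by a standard mollification.

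For the Brownian piece $M^{n,f,c}_t=\tfrac{\sigma_n}{n^{3/2}}\sum_{m=1}^n\int_0^t f'(\lambda_m^{(n)}(s{-}))\,dW^m_s$, the independence of $W^1,\dots,W^n$ from Remark~\ref{mart rep} yields
\begin{equation*}
\langle M^{n,f,c}\rangle_T=\frac{\sigma_n^2}{n^3}\sum_{m=1}^n\int_0^T|f'(\lambda_m^{(n)}(s{-}))|^2\,ds\le \frac{\sigma_n^2\,T\,\|f'\|_\infty^2}{n^2}\longrightarrow 0,
\end{equation*}
since $\sigma_n^2=\sigma^2+(n-1)/n^2$ is uniformly bounded by (\ref{sigman}).

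For the jump piece I would write
\begin{equation*}
M^{n,f,d}_t=\int_0^t\!\!\int_{\mathbb{S}(\mathbb{H}_n^1)}\!\!\int_{\mathbb{R}}\phi_n(s,r,y)\,\widetilde{J}_X(ds,dr,dy),
\end{equation*}
with $\phi_n(s,r,y)=\tfrac{1}{n}\sum_{m=1}^n\bigl[f(\lambda_m(X^{(n)}_{s-}+ry))-f(\lambda_m(X^{(n)}_{s-}))\bigr]$, the form that arises naturally from It\^o's formula applied to $\sum_m f(\lambda_m)=\mathrm{tr}\,f(X^{(n)})$. The crucial estimate is a trace bound exploiting the rank-one nature of the jumps: for $y=vv^*\in\mathbb{S}(\mathbb{H}_n^1)$ with $\|v\|=1$,
\begin{equation*}
\Bigl|\sum_{m=1}^n\bigl[f(\lambda_m(X+ry))-f(\lambda_m(X))\bigr]\Bigr|=\bigl|\mathrm{tr}(f(X+rvv^*)-f(X))\bigr|=\Bigl|\int_0^r v^*f'(X+uvv^*)v\,du\Bigr|\le |r|\,\|f'\|_\infty,
\end{equation*}
using $\|f'(A)\|_{\mathrm{op}}\le\|f'\|_\infty$ from the functional calculus together with $|v^*Bv|\le\|B\|_{\mathrm{op}}$ for a unit vector $v$. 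Hence $|\phi_n(s,r,y)|\le \tfrac{|r|}{n}\|f'\|_\infty$, and because $\pi_n(\mathbb{S}(\mathbb{H}_n^1))=n$ by (\ref{spheres}),
\begin{equation*}
\mathbb{E}[\langle M^{n,f,d}\rangle_T]\le \frac{T\|f'\|_\infty^2}{n^2}\cdot n\int_{\mathbb{R}}r^2\,\rho_n^\alpha(dr)=\frac{T\|f'\|_\infty^2}{n}\int_{|r|\le n^\alpha/(1-n^{-\alpha})}(1+r^2)\,\rho(dr).
\end{equation*}
Splitting the last integral at $|r|=1$, the contribution of $|r|\le 1$ is bounded by $2\rho(\mathbb{R})$, while on $1<|r|\le n^\alpha/(1-n^{-\alpha})$ one has $(1+r^2)\le Cn^{2\alpha}$, so the whole integral is $O(n^{2\alpha})$ and $\mathbb{E}[\langle M^{n,f,d}\rangle_T]=O(n^{2\alpha-1})\to 0$ because $\alpha\in(0,1/2)$.

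The main obstacle is the displayed trace bound. A naive estimate via the mean value theorem eigenvalue by eigenvalue, followed by Cauchy--Schwarz and the Hoffman--Wielandt inequality $\sum_m|\lambda_m(X+ry)-\lambda_m(X)|^2\le r^2$, would produce an extra factor $\sqrt{n}\,|r|$ instead of $|r|$, inflating the estimate to $O(n^{2\alpha})$, which diverges. It is precisely the rank-one structure of the jumps of $X^{(n)}$ (property (4) in Remark~\ref{mainR}), packaged through the identity $\mathrm{tr}(f(X+rvv^*)-f(X))=\int_0^r v^*f'(X+uvv^*)v\,du$, that supplies the decisive factor $1/\sqrt{n}$ allowing the bound to close; the choice $\alpha\in(0,1/2)$ in the construction of $\rho_n^\alpha$ is tailored precisely to make the resulting exponent $n^{2\alpha-1}$ negative.
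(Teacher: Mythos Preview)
Your argument is correct and follows the same overall strategy as the paper: split $M^{n,f}$ into its Brownian and compensated-jump parts, bound the predictable quadratic variation of each, and apply Doob's maximal inequality; the Brownian bound and the final estimate $\int r^{2}\rho_{n}^{\alpha}(dr)=O(n^{2\alpha})$ coincide with the paper's.

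The one genuine difference is how the rank-one jump estimate is obtained. The paper pulls out $\Vert f\Vert_{\infty}$ and then invokes Lemma~III.5 of Cabanal--Duvillard \cite{ca} to bound $\bigl(\sum_{m}[\lambda_{m}(X+rvv^{\ast})-\lambda_{m}(X)]\bigr)^{2}\le r^{2}$. You instead derive the self-contained identity $\mathrm{tr}\bigl(f(X+rvv^{\ast})-f(X)\bigr)=\int_{0}^{r}v^{\ast}f'(X+uvv^{\ast})v\,du$ and bound it by $|r|\,\Vert f'\Vert_{\infty}$ via the functional calculus. Your route avoids the external reference and makes the role of the rank-one structure completely explicit; the paper's route has the minor advantage of needing only $\Vert f\Vert_{\infty}$ rather than $\Vert f'\Vert_{\infty}$ at that step (though both proofs ultimately require $f\in\mathcal{C}_{b}^{2}$ for the Brownian part and the derivation of (\ref{mart}) anyway). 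The resulting numerical bounds on $\mathbb{E}[\langle M^{n,f,d}\rangle_{T}]$ are identical up to constants.
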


\begin{proof}
We show the convergence of each term in (\ref{mart}). Let $\varepsilon>0$.
By Doob's inequality applied to the first term on the right hand side of (\ref{mart})
{\
\begin{equation*}
\mathbb{P}\left( \sup_{0\leq t\leq T}\left \vert \frac{\sigma_{n}}{n^{3/2}}%
\int_{0}^{t}\sum_{m=1}^{n}f^{\prime}(%
\lambda_{m}^{(n)}(X_{s-}^{(n)}))dW_{s}^{m}\right \vert >\varepsilon \right)
\leq \frac{1}{\varepsilon^{2}}\frac{{\sigma_{n}}^{2}}{n^{3}}\mathbb{E}\left[
\left(
\int_{0}^{T}\sum_{m=1}^{n}f^{\prime}(%
\lambda_{m}^{(n)}(X_{s-}^{(n)}))dW_{s}^{m}\right) ^{2}\right]
\end{equation*}
}%
\begin{equation*}
\leq \frac{1}{\varepsilon^{2}}\frac{{\sigma_{n}}^{2}}{n^{3}}\mathbb{E}\left[
\int_{0}^{T}\sum_{m=1}^{n}(f^{\prime}(\lambda_{m}^{(n)}(X_{s-}^{(n)})))^{2}ds%
\right] \leq \frac{1}{\varepsilon^{2}}\frac{\sigma^2+1}{n^{2}}\Vert
f^{\prime}\Vert_{\infty}^{2}T,
\end{equation*}
which converges to $0$ as $n\rightarrow \infty$.

Let us consider $n>1$, then by an application of Doob's inequality in the
second term on the right hand side of (\ref{mart}) together with the mean value theorem and Lemma \ref{der_bound}(i),

\begin{align*}
	\mathbb{P} &  \left(  \sup_{0\leq t\leq T}\left \vert \frac{1}{n}\int_{0}^{t}\int_{\mathbb{S}(\mathbb{H}_{n}^{1})}\int_{\mathbb{R}}\mathrm{tr}\Big [f(X_{s-}^{(n)}+ry)-f(X_{s-}^{(n)})\Big ] \widetilde{J}_{X}(ds,dr,dy)\right \vert >\varepsilon \right)
	\\
	&  \leq \frac{1}{n^{2}}\frac{1}{\varepsilon^{2}}\mathbb{E}\left[  \left(
	\int_{0}^{t}\int_{\mathbb{S}(\mathbb{H}_{n}^{1})}\int
	_{\mathbb{R}}\mathrm{tr}\Big [f(X_{s-}^{(n)}+ry)-f(X_{s-}^{(n)})\Big ]  \widetilde{J}_{X}(ds,dr,dy)\right)  ^{2}\right]
	\\
  &=\frac{1}{n}\frac{1}{\varepsilon^{2}}\mathbb{E}\int_{0}^{T}\int_{\mathbb{R}}\int
	_{\mathbb{S}(\mathbb{C}_{n})}\mathrm{tr}\left(\left[
	f(X_{s-}^{(n)}+rvv^{\ast})-f(X_{s-}^{(n)})\right]\right)^{2}\pi(dv)\rho
	_{n}^{\alpha}(dr)ds\\
	&  \leq \Vert f^{\prime}\Vert_{\infty}^{2}\frac{1}{n}\frac{1}{\varepsilon^{2}%
	}T\int_{\mathbb{R}}r^{2}\rho_{n}^{\alpha}(dr)\leq \left(\frac{1}{n}+\frac{n^{4\alpha}}{n(n^{\alpha}-1)^{2}}\right)  C(f,T),
\end{align*}
for some constant $C(f,T)>0$. Hence $ \left(\frac{1}{n}+\frac{n^{4\alpha}}{n(n^{\alpha}-1)^{2}}\right)C(f,T)\rightarrow0$ as $n\rightarrow \infty$. This concludes the proof.
\end{proof}

\section{Tightness}

Let $\{(\mu_{t}^{(n)})_{t\geq0}:n\geq1\}$ be the family of measure
valued-processes (\ref{mvp}) of the Hermitian L\'{e}vy process ensemble
$\left(  X^{(n)}\right)  _{n\geq1}$ introduced in Section \ref{sectionHL}. In this section
we prove that this family is tight in the space $\mathcal{D}(\mathbb{R}%
_{+},\mathrm{Pr}(\mathbb{R}))$. We denote the set of functions $f:\mathbb{R}\to\mathbb{R}$ that are twice
differentiable with compact support by $\mathcal{C}^{2}_{c}(\mathbb{R})$.

The keys to the proof are the eigenvalue semimartingale estimates of Section
\ref{section_semi}, the fact that for each $n\geq1$ all the jumps of the Hermitian L\'{e}vy
process $\left\{  X_{t}^{(n)}:t\geq0\right\}  $ are of rank one and Lemma \ref{der_bound}.

\begin{proposition}
	\label{tightness} The family of measures $\{(\mu_{t}^{(n)})_{t\geq0}:n\geq1\}$
	is tight in the space $\mathcal{D}(\mathbb{R}_{+},\mathrm{Pr}(\mathbb{R}))$.
\end{proposition}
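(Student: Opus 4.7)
The strategy is to combine a general lifting criterion for measure-valued processes with Aldous' criterion at the scalar level. By standard results in the spirit of Roelly or Jakubowski, tightness of $\{\mu^{(n)}\}_{n\geq 1}$ in $\mathcal{D}(\mathbb{R}_+, \mathrm{Pr}(\mathbb{R}))$ reduces to two ingredients: (a) a compact containment condition for $(\mu_t^{(n)})$ in $\mathrm{Pr}(\mathbb{R})$, which I would derive from a uniform-in-$n$ bound on $\mathbb{E}\langle \mu_t^{(n)}, x^2\rangle$ on compact time intervals; this follows from the prescribed generating triplet since the truncation built into $\rho_n^\alpha$ together with the choices of $\sigma_n^2$ and $\gamma$ keeps all relevant moment integrals uniformly bounded. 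The second ingredient (b) is tightness of the real-valued processes $\{\langle \mu_\cdot^{(n)}, f\rangle\}_{n\geq 1}$ in $\mathcal{D}(\mathbb{R}_+, \mathbb{R})$ for every $f$ in a sufficiently rich class of test functions, which I take to be $\mathcal{C}^2_b(\mathbb{R})$.

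To verify (b), for each fixed $f \in \mathcal{C}^2_b(\mathbb{R})$ I would check Aldous' criterion using the semimartingale representation (\ref{SDEa}): for any sequence of stopping times $\tau_n \leq T$ and $\delta_n \to 0$, show that $\mathbb{E}|\langle \mu_{\tau_n+\delta_n}^{(n)}, f\rangle - \langle \mu_{\tau_n}^{(n)}, f\rangle| \to 0$ uniformly in $n$. The contribution of the Burger-type term is bounded by $\tfrac{1}{2}\sigma_n^2\|f''\|_\infty\delta_n$, using that $|f'(x)-f'(y)|/|x-y|\leq \|f''\|_\infty$ by the mean value theorem, and the $\gamma$-drift by $|\gamma|\|f'\|_\infty\delta_n$; both are uniform in $n$ because $\sigma_n^2 \to \sigma^2$ is bounded. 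The martingale increment $M^{n,f}_{\tau_n+\delta_n}-M^{n,f}_{\tau_n}$ is controlled by Lemma \ref{martl}, which gives $\sup_{0\leq t\leq T}|M_t^{n,f}|\to 0$ in probability.

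The crux of the argument is the jump integral in (\ref{SDEa}). Here I would exploit the rank-one nature of $\Delta X^{(n)}$ together with Lemma III.5 of \cite{ca}: because $rvv^*$ is Hermitian of rank one with Frobenius norm $|r|$, a Weyl-interlacing argument yields
\[
\sum_{m=1}^n \bigl|f(\lambda_m^{(n)}(X+rvv^*))-f(\lambda_m^{(n)}(X))\bigr| \leq \|f'\|_\infty\, |r|,
\]
and a second-order refinement bounds the truncated Taylor remainder by $C\|f''\|_\infty r^2$ when $|r|\leq 1$. Splitting the jump integral into $\{|r|\leq 1\}$ and $\{|r|>1\}$ and using the explicit form $\rho_n^\alpha(dr) = \frac{1+r^2}{r^2}[\rho(-dr)+\rho(dr)]1_{(0,n^{2\alpha}/(n^\alpha-1))}(r)$ reduces the estimate to
\[
\int_{|r|\leq 1}(1+r^2)\,[\rho(-dr)+\rho(dr)] \;+\; \int_{|r|>1}\frac{1+r^2}{|r|}\,[\rho(-dr)+\rho(dr)] < \infty,
\]
which is finite since $\rho$ is a finite measure. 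The resulting contribution to Aldous' estimate is linear in $\delta_n$, uniformly in $n$.

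The main obstacle is precisely this jump-integral step: the support of $\rho_n^\alpha$ grows with $n$, so the crude bound $|f(\lambda+r)-f(\lambda)|\leq 2\|f\|_\infty$ on large jumps cannot give uniformity in $n$. The extra factor $|r|^{-1}$ supplied by the rank-one Lipschitz estimate of \cite{ca} is exactly what turns the divergent weight $(1+r^2)/r^2$ of $\rho_n^\alpha$ into the $\rho$-integrable weight $(1+r^2)/|r|$ on $\{|r|>1\}$; this is the place where the rank-one structure of the ensemble plays its essential role. Combining the three deterministic drift estimates with Lemma \ref{martl} verifies Aldous' criterion, yielding (b); together with the compact containment (a), tightness of $\{\mu^{(n)}\}_{n\geq 1}$ in $\mathcal{D}(\mathbb{R}_+, \mathrm{Pr}(\mathbb{R}))$ follows.
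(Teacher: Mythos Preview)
Your overall scheme is close to the paper's, but there is one genuine gap and one point where you take a different (and, if properly justified, cleaner) route.

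The gap is in step (a). Your compact containment via a uniform bound on $\mathbb{E}\langle\mu_t^{(n)},x^2\rangle$ does not go through in general: the jump contribution to $\frac{1}{n}\mathbb{E}\|X_t^{(n)}\|^2$ is of order $t\int r^2\rho_n^\alpha(dr)=t\int_{0<r<R_n}(1+r^2)[\rho(-dr)+\rho(dr)]$ with $R_n=n^{2\alpha}/(n^\alpha-1)\to\infty$, and this diverges whenever $\int r^2\rho(dr)=\infty$, i.e.\ precisely for free L\'evy processes without finite variance (free stable with index $<2$, free Cauchy, etc.). The truncation keeps each $\rho_n^\alpha$ compactly supported but does \emph{not} make the second moments uniform in $n$. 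The paper avoids moments entirely: it first establishes tightness with $\mathrm{Pr}(\mathbb{R})$ equipped with the \emph{vague} topology (no mass control needed), and then upgrades to the weak topology via the M\'el\'eard--Roelly criterion \cite{MR}, using that $\langle\mu_t^{(n)},1\rangle\equiv 1$ is trivially tight and that every subsequential limit is continuous (supplied by Lemma~\ref{martl}).

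On the small-jump compensator you and the paper diverge. The paper does not invoke a ``second-order refinement'' of interlacing; it splits the integrand on $\{|r|\le 1\}$ into the piece $f(\lambda_m(X+rvv^*))-f(\lambda_m(X))-f'(\lambda_m(X))\Delta\lambda_m$, bounded via $\|f''\|_\infty\sum_m(\Delta\lambda_m)^2\le\|f''\|_\infty r^2$, and the remainder $f'(\lambda_m(X))[\Delta\lambda_m-\mathrm{tr}(D\lambda_m\,rvv^*)]$, whose control forces the second derivative of the eigenvalue map and hence Lemma~\ref{eigen} on $\mathbb{E}|\lambda_m-\lambda_j|^{-p}$, producing a bound of order $\delta^{1-1/p}$ rather than $\delta$. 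Your asserted uniform bound $C\|f''\|_\infty r^2$ for the full sum $\sum_m[\cdot]$ can in fact be made rigorous without Lemma~\ref{eigen}: after summing over $m$ the expression is exactly the second-order Taylor remainder of $g(r):=\mathrm{tr}\,f(X_{s-}+rvv^*)$, and the Daleckii--Krein formula yields $|g''|\le\|f''\|_\infty\|vv^*\|^2=\|f''\|_\infty$ uniformly. This is genuinely simpler than the paper's route, but as written your sentence gives no hint of the mechanism; you should either spell out this cancellation argument or fall back on the paper's decomposition via Lemma~\ref{eigen}. Your handling of the Burger term, the drift, the large-jump integral via Lemma~III.5 of \cite{ca}, and the martingale via Lemma~\ref{martl} are all in line with the paper.
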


\begin{proof}
	First we will start by establishing that the family of measure
		valued-processes $\{(\mu_{t}^{(n)})_{t\geq
		0}:n\geq1\}$ is tight in the space $\mathcal{D}(\mathbb{R}_{+},\mathrm{Pr}(\mathbb{R}))$, when
	$\mathrm{Pr}(\mathbb{R})$ is endowed with the topology of the vague convergence.
		To this end, we use
	the Aldous--Rebolledo Criterion (see \cite{al}, \cite{E}, \cite{rc}) to prove
    that for each $f\in\mathcal{C}_{c}^{2}(\mathbb{R})$ the sequence of real
	processes $\{(\langle\mu_{t}^{(n)},f\rangle)_{t\geq0}:n\geq1\}$ is tight. We
	split the proof of the tightness of the semimartingale $\langle\mu
	^{(n)},f\rangle$ into two steps: the first is on the bounded variation part
	and the second on the martingale part.
	
	For any $f\in\mathcal{C}_{c}^{2}(\mathbb{R})$ we have from (\ref{SDEaa}) that
   	\begin{align}
		&  \langle\mu_{t}^{(n)},f\rangle-\langle\mu_{s}^{(n)},f\rangle={\frac
			{\sigma_{n}^{2}}{2}}\int_{s}^{t}\int_{\mathbb{R}^{2}}\frac{f^{\prime
			}(x)-f^{\prime}(y)}{x-y}\mu_{u}^{(n)}(dx)\mu_{u}^{(n)}(dy)du+{\gamma}\int%
		_{s}^{t}\int_{\mathbb{R}}f^{\prime}(x)\mu_{u}^{(n)}(dx)du\nonumber\\
		&+M_{t}^{n,f}-M_{s}^{n,f}  + \int_{s}^{t}\int_{\mathbb{S}(\mathbb{C}_{n})}\int%
		_{\mathbb{R}}\text{tr}\Big [f(X_{u-}^{(n)}+rvv^{\ast})-f(X_{u-}^{(n)})-\mathcal{D}f(X_{u-})(rvv^*)1_{\{|r|\leq1\}}\Big ]\rho
		_{n}^{\alpha}(dr)\pi(dv)du.\nonumber
	\end{align}
	Let us denote by $V^{n,f}$ the bounded variation part of the semimartingale
	$\langle\mu^{(n)},f\rangle$. Then, for each $0\leq s\leq t$,%
	\begin{align}
		&  V_{t}^{n,f}-V_{s}^{n,f}={\frac{\sigma_{n}^{2}}{2}}\int_{s}^{t}%
		\int_{\mathbb{R}^{2}}\frac{f^{\prime}(x)-f^{\prime}(y)}{x-y}\mu_{u}%
		^{(n)}(dx)\mu_{u}^{(n)}(dy)du+{\gamma}\int_{s}^{t}\int_{\mathbb{R}}f^{\prime}(x)\mu_{u}%
		^{(n)}(dx)du\nonumber\\
		&  +\int_{s}^{t}\int_{\mathbb{S}(\mathbb{C}_{n})}\int%
		_{\mathbb{R}}\text{tr}\Big [f(X_{u-}^{(n)}+rvv^{\ast}))-f(X_{u-}^{(n)})-\mathcal{D}f(X_{u-})(rvv^*)1_{\{|r|\leq1\}}\Big ]\rho
		_{n}^{\alpha}(dr)\pi(dv)du.\label{SDE3}%
	\end{align}
	
	Let $\delta>0$ and $\theta\in\lbrack0,\delta]$. Let $T^{\prime}>0$ and let
	$(\tau_{n})_{n\geq1}$ be a sequence of stopping times such that $0\leq\tau
	_{n}<T^{\prime}$.
	
	Next we estimate each term in the right hand of (\ref{SDE3}).
	By the mean value theorem we have for any $x,y\in\mathbb{R}$
	\begin{align*}
	|f'(x)-f'(y)|\leq \|f''\|_{\infty}|x-y|.
	\end{align*}
    Hence, for the first term in (\ref{SDE3}) we obtain
		\begin{equation}
	\left\vert {\sigma_{n}}\int_{\tau_{n}}^{\tau_{n}+\theta}%
	\int_{\mathbb{R}^{2}}\frac{f^{\prime}(x)-f^{\prime}(y)}{x-y}\mu_{u}%
	^{(n)}(dx)\mu_{u}^{(n)}(dy)du\right\vert \leq(\sigma^{2}+1)^{1/2}\Vert
	f^{\prime\prime}\Vert_{\infty}\delta.\label{var1}%
	\end{equation}
	For the second term in the right hand of (\ref{SDE3}), 
	\begin{equation}
	\left\vert \int_{\tau_{n}}^{\tau_{n}+\theta}\int_{\mathbb{R}}f^{\prime}(x) \mu_{u}
	^{(n)}(dx)du\right\vert\leq\Vert f^{\prime}\Vert_{\infty}\delta.\label{var2}%
	\end{equation}
	
	For the jump part in (\ref{SDE3}), we can apply Fubini's theorem to obtain
	\begin{align}\label{aux_ni_2}
\int_{\tau_{n}}^{\tau_{n}+\theta}&\int_{\mathbb{S}(\mathbb{C}_{n})}\int
		_{\mathbb{R}}\mathrm{tr}\Big [f(X_{u-}^{(n)}+rvv^{\ast})-f(X_{u-}^{(n)})-\mathcal{D}f(X_{u-})(rvv^*)1_{\{|r|\leq1\}}\Big ]\rho
		_{n}^{\alpha}(dr)\pi(dv)du\notag\\
&=\int_{\tau_{n}}^{\tau_{n}+\theta}\int
		_{|r|\geq1}\int_{\mathbb{S}(\mathbb{C}_{n})}\mathrm{tr}\Big [f(X_{u-}^{(n)}+rvv^{\ast})-f(X_{u-}^{(n)})\Big ]\pi(dv)\rho
		_{n}^{\alpha}(dr)du\notag\\
&+\int_{\tau_{n}}^{\tau_{n}+\theta}\int
		_{|r|\leq1}\int_{\mathbb{S}(\mathbb{C}_{n})}\mathrm{tr}\Big [f(X_{u-}^{(n)}+rvv^{\ast})-f(X_{u-}^{(n)})-\mathcal{D}f(X_{u-})(rvv^*)1_{\{|r|\leq1\}}\Big ]\pi(dv)\rho
		_{n}^{\alpha}(dr)du.
\end{align}
For the first term in the right hand side of  (\ref{aux_ni_2}) we use Lemma III.5 in \cite{ca} and the fact that $f$ has compact support to obtain
\begin{align}\label{var4prime}
	\Bigg|\int_{\tau_{n}}^{\tau_{n}+\theta}\int
	_{|r|\geq1}&\int_{\mathbb{S}(\mathbb{C}_{n})}\text{tr}\Big [f(X_{u-}^{(n)}+rvv^{\ast})-f(X_{s-}^{(n)})\Big ]\pi(dv)\rho
	_{n}^{\alpha}(dr)du\Bigg|\notag\\&\leq\|f\|_{1}^{\prime}\int_{\tau_{n}}^{\tau_{n}+\theta}\int
	_{|r|\geq1}\rho
	_{n}^{\alpha}(dr)du\leq C_1(f)\delta,
\end{align}	
where $C_{1}(f)>0$ is a constant and $\|g\|_1'=\sup_{\overset{n\geq 1}{x_1\leq y_1\leq x_2\leq \dots\leq y_n}}\sum_{i=1}^n|g(y_i)-g(x_i)|$ for any $g\in\mathcal{C}_c^2(\mathbb{R})$. 
For the second term in the right hand of \eqref{aux_ni_2} we use
 Lemma \ref{der_bound}(ii) and  Taylor's theorem to obtain
	\begin{align}\label{var5}
		\Bigg|\int_{\tau_{n}}^{\tau_{n}+\theta}&\int
		_{|r|\leq1}\int_{\mathbb{S}(\mathbb{C}_{n})}\mathrm{tr}\Big [f(X_{u-}^{(n)}+rvv^{\ast})-f(X_{u-}^{(n)})-\mathcal{D}f(X_{u-})(rvv^*)1_{\{|r|\leq1\}}\Big]\pi(dv)\rho_{n}^{\alpha}(dr)du\Bigg|\notag\\
		&=\Bigg|\int_{\tau_{n}}^{\tau_{n}+\theta}\int
		 _{|r|\leq1}\int_{\mathbb{S}(\mathbb{C}_{n})}\int_0^1r^2(1-\xi)\left[\mathrm{tr}\left(\mathcal{D}^2f(X_{u-}^{(n)}+r\xi vv^*)\left((vv^{\ast})^2\right)\right)\right]d\xi\pi(dv)\rho_{n}^{\alpha}(dr)du\Bigg|\notag\\
		&\leq \|f''\|_{\infty}\int_{\tau_{n}}^{\tau_{n}+\theta}\int_{|r|\leq 1}r^2\rho
		_{n}^{\alpha}(dr)du\leq C_2(f)\delta,
	\end{align}
	where $C_{2}(f)>0$ is a constant.

From (\ref{var1}), (\ref{var2}), (\ref{var4prime}) and (\ref{var5})
we conclude that there exists a constant $K_{1}>0$, which does not depend on
$n\geq1$, such that
\begin{equation}
\sup_{n\geq1}\sup_{\theta\in\lbrack0,\delta]}\mathbb{E}\left[  \left\vert
V_{\tau_{n}+\theta}^{n,f}-V_{\tau_{n}}^{n,f}\right\vert \right]  <K_{1}\delta  .\label{varti}%
\end{equation}

The next step is to prove the tightness of the laws of the martingale part of
the semimartingale $\langle\mu^{(n)},f\rangle$. Recall that the quadratic
variation of the martingale $M^{n,f}$ is given by
\begin{align}
	\langle M^{n,f},M^{n,f}\rangle_{t} &  =\frac{\sigma_{n}^{2}}{n^{3}}\int%
	_{0}^{t}\sum_{m=1}^{n}(f^{\prime}(\lambda_{m}^{(n)}(X_{s-}^{(n)}%
	)))^{2}ds\nonumber\\
	&  +\frac{1}{n}\int_{0}^{t}\int_{\mathbb{S}(\mathbb{C}_{n})}%
	\int_{\mathbb{R}}\left(\mathrm{tr}\left[
	f(X_{s-}^{(n)}+rvv^{\ast})-f(X_{s-}^{(n)})\right] \right)^{2}
	\rho_{n}^{\alpha}(dr)\pi(dv)ds.\nonumber
\end{align}
For the first term, note that
\begin{equation}
\frac{{\sigma_{n}}^{2}}{n^{3}}\mathbb{E}\left[  \int_{\tau_{n}}^{\tau
	_{n}+\theta}\sum_{m=1}^{n}(f^{\prime}(\lambda_{m}^{(n)}(X_{s-}^{(n)}%
)))^{2}ds\right]  \leq\frac{{(\sigma+1)}^{2}}{n^{2}}\Vert f^{\prime}%
\Vert_{\infty}^{2}\delta.\label{martti1}%
\end{equation}
For the second term, by the proof of Lemma \ref{martl} together with Lemma III.5 in \cite{ca}, one obtains for $n\geq 1$
\begin{align}
	\frac{1}{n}   \mathbb{E}\Bigg[  \int_{\tau_{n}}^{\tau_{n}+\theta}%
	&\int_{\mathbb{S}(\mathbb{C}_{n})}\int_{\mathbb{R}}\left(\mathrm{tr}\left[
	f(X_{s-}^{(n)}+rvv^{\ast})-f(X_{s-}^{(n)})\right] \right)^{2}\rho_{n}^{\alpha}%
	(dr)\pi(dv)ds\Bigg]  \nonumber\label{marti2}\\
	&  \leq\frac{\delta}{n}\left(\Vert f'\Vert_{\infty}^{2}\int_{|r|\leq 1}(1+r^{2})%
	\rho(dr)+(\|f\|_1')^2\int_{|r|> 1}\frac{1+r^2}{r^2}
	\rho(dr)\right)\leq\delta C_{3}(f)\text{,}%
\end{align}
for some constant $C_{3}(f)>0$. From (\ref{martti1}) and (\ref{marti2})
there exists a constant $K_{2}>0$ independent of $n\geq1$ such that
\begin{equation}
\sup_{n\geq1}\sup_{\theta\in\lbrack0,\delta]}\mathbb{E}\left[  \left\vert
\langle M^{n,f},M^{n,f}\rangle_{\tau_{n}+\theta}-\langle M^{n,f}%
,M^{n,f}\rangle_{\tau_{n}}\right\vert \right]  <\delta K_{2}.\label{martii}%
\end{equation}

Let us fix $T>0$. Then proceeding as in the first part of the proof, it can be
seen that there exists a constant $K_{1}(T)>0$ depending on $T$ such that
\[
\sup_{n\geq1}\mathbb{E}\left[  \left(  \sup_{t\in\lbrack0,T]}V_{t}%
^{n,f}\right)  ^{2}\right]  <K_{1}(T).
\]
On the other hand, from the proof of Lemma \ref{martl}, there exists a
constant $K_{2}(T)>0$, that depends on $T$, such that
\[
\sup_{n\geq1}\mathbb{E}\left[  \left(  \sup_{t\in\lbrack0,T]}M_{t}%
^{n,f}\right)  ^{2}\right]  <K_{2}(T).
\]
Therefore there exists a constant $K(T)>0$ depending on $T$ such that
\begin{equation}
\sup_{n\geq1}\mathbb{E}\left[  \left(  \sup_{t\in\lbrack0,T]}\langle\mu_t
^{(n)},f\rangle\right)  ^{2}\right]  <K(T).\label{tight}%
\end{equation}

Now, from (\ref{varti}), (\ref{martii}) and (\ref{tight}), we can use the
Aldous--Rebolledo criterion (see \cite{al}, \cite{E}, \cite{rc}) to conclude
that the sequence of real processes $\{(\langle\mu_{t}^{(n)},f\rangle
)_{t\geq0}:n\geq1\}$ is tight, and that consequently the sequence of processes
$\{(\mu_{t}^{(n)})_{t\geq0}:n\geq1\}$ is tight in the space $\mathcal{D}%
(\mathbb{R}_{+},\mathrm{Pr}(\mathbb{R}))$ with $\mathrm{Pr}(\mathbb{R})$
endowed with the topology of vague convergence.

It remains to extend the above result to the case when $\mathrm{Pr}%
(\mathbb{R})$ is endowed with the topology of weak convergence. Note that
taking $f=1$, the sequence of real-valued processes $\{(\langle\mu_{t}%
^{(n)},f\rangle)_{t\geq0}:n\geq1\}$ is tight. On the other hand note that
Lemma \ref{martl} implies that for any convergent subsequence of $\{(\mu
_{t}^{(n)})_{t\geq0}:n\geq1\}$, the limit is strongly continuous. Therefore by
an application of the M\'{e}l\'{e}ard--Roelly criterion (see \cite{MR}), it
follows that the sequence $\{(\mu_{t}^{(n)})_{t\geq0}:n\geq1\}$ is tight in
the space $\mathcal{D}(\mathbb{R}_{+},\mathrm{Pr}(\mathbb{R})).$
\end{proof}

\section{Characterization of the weak limit of the measure valued-processes}

Let $z\in \mathbb{C}\backslash \mathbb{R}$ and $f_{z}(x)=(z-x)^{-1}$ for $%
x\in \mathbb{R}$. For any continuous function $(\nu_{t})_{t\geq0}\in C(%
\mathbb{R}_{+},\mathrm{Pr}(\mathbb{R}))$, the Cauchy--Stieltjes transform of $\nu_{t}$ is defined as
\begin{equation*}
\psi_{\nu}(t,z):=\int_{\mathbb{R}}f_{z}(x)\nu_{t}(dx).
\end{equation*}
We identify the weak limit of the sequence $\{(\langle
	\mu_{t}^{(n)},f\rangle)_{t\geq0}:n\geq1\}$ to be the family $%
	(\mu_{t})_{t\geq0}$ such that its Cauchy--Stieltjes transform satisfies the Burgers equation that characterizes the Cauchy--Stieltjes transform of the law of the
	free L\'{e}vy process $\left \{ Z_{t}:t\geq0\right \} ,$ appearing in the
	proof of Theorem 5.10 in Bercovici and Voiculescu \cite{BV} (see also \cite{ca} proof of Theorem III.2 and \cite{HS} (Theorem 4.5 and Appendix A.1))). This is shown in the following result.

\begin{theorem}
\label{thfbm} Assume that $\mu_{0}^{(n)}$ converges weakly to $\delta_{0}$.
Then the family of measure-valued processes $\{(\mu_{t}^{(n)})_{t\geq0}:n%
\geq1\}$ converges weakly in $\mathcal{D}(\mathbb{R}_{+},\mathrm{Pr}(\mathbb{%
R}))$ to a unique continuous probability-measure valued function $%
(\mu_{t})_{t\geq0},$ satisfying for each $t\geq0$,
\begin{equation}\label{limit_eq}
{\frac{\partial}{\partial t}\psi_{\mu}(t,z)}=-\sigma^{2}\psi_{\mu}(t,z)\frac{%
\partial}{\partial z}\psi_{\mu}(t,z)-\eta \frac{\partial}{\partial z}%
\psi_{\mu}(t,z)-\frac{\partial}{\partial z}\psi_{\mu}(t,z)\int _{\mathbb{R}%
\backslash \{0\} }\frac{\psi_{\mu}(t,z)+r}{1-r\psi_{\mu}(t,z)}\rho(dr).
\end{equation}
\end{theorem}

The following auxiliary lemma can be obtained from the proof of Lemma III.6 in \cite{ca}.
\begin{lemma}\label{adap cabanal}
Let $X=(X_1,\dots,X_n)$, $Y=(Y_1,\dots,Y_n)$ be random vectors and let $V=(V_1,\dots,V_n)$ be an independent random vector Haar distributed on $\mathbb{S}(\mathbb{C}_{n})$. Then for any   $\varepsilon>0$ and any bounded function $f:\mathbb{R}\mapsto\mathbb{C}$ there exists a constant $C(f)>0$ not dependent on $X$ such that
\begin{align*}
	\mathbb{P}\left(\Bigg|\sum_{i=1}^n |V_i|^2f(X_i)-\frac{1}{n}\sum_{i=1}^n f(X_i)\Bigg|\geq\varepsilon\right)\leq
	C(f)\exp \left( -\frac{(n-1)}{8\Vert f \Vert_{\infty}^{ 2}%
	}\varepsilon^{2}\right).
\end{align*}
\end{lemma}

The following lemma will be useful for identifying the limit law
of the sequence of empirical measures $\{(\mu^{(n)})_{t\geq0}:n\geq1\}$. 
\begin{lemma}\label{converg}
For $z \in \mathbb{C}\backslash \mathbb{R}$ and $t>0$,
\begin{align*}
\lim_{n\to\infty}\mathbb{E}\Bigg[\Bigg|&\int_{0}^{t}\int_{\mathbb{S}(\mathbb{C}_{n})}\int
_{\mathbb{R}}\mathrm{tr}\Big [f_z(X_{u-}^{(n)}+rvv^{\ast})-f_z(X_{u-}^{(n)})-\mathcal{D}f_z(X_{u-}^{(n)})(rvv^*)1_{\{|r|\leq 1\}}\Big]\notag\\
&\hspace{1cm}-\mathrm{tr}\Big [f_z\left(X_{u-}^{(n)}+\frac{r}{n}\mathrm{I_n}\right)-f_z(X_{u-}^{(n)})-\mathcal{D}f_z(X_{u-}^{(n)})\left(\frac{r}{n}\mathrm{I_n}\right)1_{\{|r|\leq 1\}}\Big]\rho_{n}^{\alpha}(dr)\pi(dv)du\Bigg|\Bigg]=0.
\end{align*}
\end{lemma}
\begin{proof}
 	(i) First we will obtain some estimations needed for the rest of the proof. For $r\in \mathbb{R}$, and $\xi>0$ let us denote by $\{\lambda_i^{(n)}(u,v,r,\xi)\}_{i=1}^n$ to the family of eigenvalues of the matrix  $X_{u-}^{(n)}+\frac{r}{n}\mathrm{I_n}+r\xi\left(vv^{\ast}-\frac{1}{n}\mathrm{I_n}\right)$. Additionally, we  define the diagonal matrix $\Lambda(u,v,r,\xi):=\sum_{i=1}^n\lambda_i^{(n)}(u,v,r,\xi)P_i$, where $P_{i}$ are the projections onto the coordinate axes of $\mathbb{H}_n$.

We note that by using Taylor's theorem we obtain for any $r\in \mathbb{R}$
\begin{align*}
&\int_{\mathbb{S}(\mathbb{C}_{n})}\Bigg|\mathrm{tr}\Big [f_z(X_{u-}^{(n)}+rvv^{\ast})-f_z\left(X_{u-}^{(n)}+\frac{r}{n}\mathrm{I_n}\right)\Big]\Bigg|\pi(dv)\notag\\
&\leq\int_{\mathbb{S}(\mathbb{C}_{n})}\int_0^1\Bigg|r\mathrm{tr}\Bigg [\mathcal{D}f_z\left(X_{u-}^{(n)}+\frac{r}{n}\mathrm{I_n}+r\xi\left(vv^{\ast}-\frac{1}{n}\mathrm{I_n}\right)\right)\left(vv^{\ast}-\frac{1}{n}\mathrm{I_n}\right)\Bigg]\Bigg|d\xi\pi(dv)\notag\\
&=\int_{\mathbb{S}(\mathbb{C}_{n})}\int_0^1\Bigg|r\mathrm{tr}\left[\mathcal{D}f_z(\Lambda(u,v,r,\xi))\circ\left(vv^{\ast}-\frac{1}{n}\mathrm{I_n}\right)\right]\Bigg|d\xi\pi(dv)\\
&=\int_{\mathbb{S}(\mathbb{C}_{n})}\int_0^1|r|\Bigg|\sum_{i=1}^n |v_i|^2f_z'(\lambda_i^{(n)}(u,v,r,\xi))-\frac{1}{n}\sum_{i=1}^n f_z'(\lambda_i^{(n)}(u,v,r,\xi))\Bigg|d\xi\pi(dv),
\end{align*}
where in order to obtain the first equality we applied \eqref{iden_fd} together with the fact that under $\pi$, $vv^*$ is invariant under unitary conjugations. \newline
Therefore, for every $\varepsilon>0$
\begin{align*}
&\mathbb{E}\Bigg[\int_{\mathbb{S}(\mathbb{C}_{n})}\Bigg|\mathrm{tr}\Big [f_z(X_{u-}^{(n)}+rvv^{\ast})-f_z\left(X_{u-}^{(n)}+\frac{r}{n}\mathrm{I_n}\right)\Big]\Bigg|\pi(dv)\Bigg]\notag\\
&\leq\int_{\mathbb{S}(\mathbb{C}_{n})}\int_0^12|r|\|f_z\|_{\infty}\mathbb{P}\Bigg(\Bigg|\sum_{i=1}^n |v_i|^2f_z'(\lambda_i^{(n)}(u,v,r,\xi))-\frac{1}{n}\sum_{i=1}^n f_z'(\lambda_i^{(n)}(u,v,r,\xi))\Bigg|\geq\varepsilon\Bigg)d\xi\pi(dv)\notag\\
&+\varepsilon\int_{\mathbb{S}(\mathbb{C}_{n})}\int_0^1|r|d\xi\pi(dv).
\end{align*}
Hence, using Lemma \ref{adap cabanal}
we obtain that for every $r\in \mathbb{R}$, and $\varepsilon>0$
\begin{align}\label{nv_00}
\mathbb{E}\Bigg[\int_{\mathbb{S}(\mathbb{C}_{n})}\Bigg|\mathrm{tr}\Big [f_z(X_{u-}^{(n)}+rvv^{\ast})-f_z&\left(X_{u-}^{(n)}+\frac{r}{n}\mathrm{I_n}\right)\Big]\Bigg|\pi(dv)\Bigg]\notag\\
&\leq |r|\left[\varepsilon+2\|f_z'\|_{\infty}C(f_z)\exp \left( -\frac{(n-1)}{8\Vert f_z \Vert_{\infty}^{ 2}%
}\varepsilon^{2}\right)\right].
\end{align}

Finally, using \eqref{iden_fd} together with Lemma \ref{adap cabanal}
\begin{align}\label{nv_4a}
\mathbb{E}\Bigg[\int_{\mathbb{S}(\mathbb{C}_{n})}\Bigg|\mathrm{tr}\Big [\mathcal{D}f_z(X_{u-}^{(n)})&\left(vv^*-\frac{1}{n}\mathrm{I_n}\right)\Big]\Bigg|\pi(dv)\Bigg]\leq \mathbb{E}\left[\int_{\mathbb{S}(\mathbb{C}_{n})}\Bigg|\sum_{i=1}^nf^{\prime}_z(\lambda_i^{(n)}(X_{u-}^{(n)}))\left(|v_i|^2-\frac{1}{n}\right)\Bigg|\pi(dv)\right]\notag\\&\leq \varepsilon +2\|f'_z\|_{\infty}\int_{\mathbb{S}(\mathbb{C}_{n})}\mathbb{P}\left(\Bigg|\sum_{i=1}^nf^{\prime}_z(\lambda_i^{(n)}(X_{u-}^{(n)}))\left(|v_i|^2-\frac{1}{n}\right)\Bigg|\geq \varepsilon\right)\pi(dv)\notag\\
&\leq \varepsilon +2\|f'_z\|_{\infty}C(f_z)\exp \left( -\frac{(n-1)}{8\Vert f_z \Vert_{\infty}^{ 2}%
}\varepsilon^{2}\right).
\end{align}

(ii) Let us consider $\beta\in(1/4,1/2)$. Then using \eqref{nv_00} with $\varepsilon=n^{-\beta}$ and the fact that $\alpha\in(0,1/4)$ we obtain
\begin{align}\label{nv_2}
&\mathbb{E}\Bigg[\Bigg|\int_{0}^{t}\int_{\mathbb{S}(\mathbb{C}_{n})}\int
_{|r|\geq 1}\mathrm{tr}\Big [f_z(X_{u-}^{(n)}+rvv^{\ast})-f_z(X_{u-}^{(n)})\Big]\rho_{n}^{\alpha}(dr)\pi(dv)du\notag\\
&\hspace{2cm}-\int_{0}^{t}\int_{\mathbb{S}(\mathbb{C}_{n})}\int
_{|r|\geq 1}\mathrm{tr}\Big [f_z\left(X_{u-}^{(n)}+\frac{r}{n}\mathrm{I_n}\right)-f_z(X_{u-}^{(n)})\Big]\rho_{n}^{\alpha}(dr)\pi(dv)du\Bigg|\Bigg]\notag\\
&\leq t\int_{|r|\geq 1}|r|\left(\frac{1+r^2}{r^2}\right)1_{(1/n,n^{2\alpha}/(n^\alpha-1))}(|r|)\rho(dr)\left[\frac{1}{n^\beta}+2\|f_z'\|_{\infty}C(f_z)\exp \left( -\frac{(n-1)}{8\Vert f_z \Vert_{\infty}^{ 2}%
}\frac{1}{n^{2\beta}}\right)\right]\notag\\
&\leq t\int_{|r|\geq 1}\left(\frac{1+r^2}{r^2}\right)\rho(dr)\left[\frac{n^{2\alpha}}{n^\beta(n^{\alpha}-1)}+2\|f_z'\|_{\infty}C(f_z)\frac{n^{2\alpha}}{(n^{\alpha}-1)}\exp \left( -\frac{(n-1)}{8\Vert f_z \Vert_{\infty}^{ 2}%
}\frac{1}{n^{2\beta}}\right)\right]\overset{n\uparrow\infty}{\longrightarrow}0.
\end{align}

(iii) Using \eqref{nv_00} together with \eqref{nv_4a} we obtain for each $r\in \mathbb{R}$
\begin{align}\label{nv_6a}
&\limsup_{n\to\infty}\mathbb{E}\Bigg[\int_{\mathbb{S}(\mathbb{C}_{n})}\Bigg|\mathrm{tr}\Big [f_z(X_{u-}^{(n)}+rvv^{\ast})-f_z(X_{u-}^{(n)})-\mathcal{D}f_z(X_{u-}^{(n)})(rvv^*)\Big]\notag\\
&\hspace{2cm}-\mathrm{tr}\Big [f_z\left(X_{u-}^{(n)}+\frac{r}{n}\mathrm{I_n}\right)-f_z(X_{u-}^{(n)})-\mathcal{D}f_z(X_{u-}^{(n)})\left(\frac{r}{n}\mathrm{I_n}\right)\Big]\Bigg|\pi(dv)\Bigg]\notag\\
&\leq \limsup_{n\to\infty}\Bigg\{\mathbb{E}\Bigg[\int_{\mathbb{S}(\mathbb{C}_{n})}\Bigg|\mathrm{tr}\Big [f_z(X_{u-}^{(n)}+rvv^{\ast})-f_z\left(X_{u-}^{(n)}+\frac{r}{n}\mathrm{I_n}\right)\Big]\Bigg|\pi(dv)\Bigg]\notag\\&+\mathbb{E}\Bigg[\int_{\mathbb{S}(\mathbb{C}_{n})}\Bigg|\mathrm{tr}\Big [\mathcal{D}f_z(X_{u-}^{(n)})\left(rvv^*-\frac{r}{n}\mathrm{I_n}\right)\Big]\Bigg|\pi(dv)\Bigg]\Bigg\}\notag\\
&\leq \limsup_{n\to\infty}2|r|\left[\varepsilon+2\|f_z'\|_{\infty}C(f_z)\exp \left( -\frac{(n-1)}{8\Vert f_z \Vert_{\infty}^{ 2}%
}\varepsilon^{2}\right)\right]\leq 2|r|\varepsilon.
\end{align}
So taking $\varepsilon\downarrow0$ in \eqref{nv_6a} we obtain
\begin{align}\label{nv_5a}
&\lim_{n\to\infty}\mathbb{E}\Bigg[\int_{\mathbb{S}(\mathbb{C}_{n})}\Bigg|\mathrm{tr}\Big [f_z(X_{u-}^{(n)}+rvv^{\ast})-f_z(X_{u-}^{(n)})-\mathcal{D}f_z(X_{u-}^{(n)})(rvv^*)\Big]\notag\\
&\hspace{2cm}-\mathrm{tr}\Bigg [f_z\left(X_{u-}^{(n)}+\frac{r}{n}\mathrm{I_n}\right)-f_z(X_{u-}^{(n)})-\mathcal{D}f_z(X_{u-}^{(n)})\left(\frac{r}{n}\mathrm{I_n}\right)\Bigg]\Bigg|\pi(dv)\Bigg]=0.
\end{align}
We also note that by using Taylor's theorem and Lemma \ref{der_bound}(ii)
\begin{align}\label{nv_4}
	&\int_{\mathbb{S}(\mathbb{C}_{n})}\Bigg|\mathrm{tr}\Big [f_z(X_{u-}^{(n)}+rvv^{\ast})-f_z(X_{u-}^{(n)})-\mathcal{D}f_z(X_{u-}^{(n)})(rvv^*)\Big]\pi(dv)\notag\\
	&\hspace{3cm}-\mathrm{tr}\Big [f_z\left(X_{u-}^{(n)}+\frac{r}{n}\mathrm{I_n}\right)-f_z(X_{u-}^{(n)})-\mathcal{D}f_z(X_{u-}^{(n)})\left(\frac{r}{n}\mathrm{I_n}\right)\Big]\Bigg|\pi(dv)\notag\\
	&\leq \int_{\mathbb{S}(\mathbb{C}_{n})}\int_0^1r^2(1-\xi)\Bigg|\mathrm{tr}\Bigg [\mathcal{D}^2f_z(X_{u-}^{(n)}+r\xi vv^*)\left((vv^{\ast})^2\right)-\mathcal{D}^2f_z\left(X_{u-}^{(n)}+\frac{r\xi}{n} \mathrm{I_n}\right)\left(\left(\frac{1}{n}\mathrm{I_n}\right)^2\right)\Bigg]\Bigg|d\xi\pi(dv)\notag\\
	&\hspace{2cm}\leq4\|f_z''\|_{\infty}r^2\int_0^1(1-\xi)d\xi=2r^2\|f_z''\|_{\infty}.
\end{align}
Hence by \eqref{nv_4} we can apply dominated convergence and using \eqref{nv_5a} we obtain
\begin{align}\label{nv_7}
\lim_{n\to\infty}\mathbb{E}\Bigg[\Bigg|&\int_{0}^{t}\int_{\mathbb{S}(\mathbb{C}_{n})}\int
_{|r|\leq 1}\mathrm{tr}\Big [f_z(X_{u-}^{(n)}+rvv^{\ast})-f_z(X_{u-}^{(n)})-\mathcal{D}f_z(X_{u-}^{(n)})(rvv^*)\Big]\notag\\
&\hspace{2cm}-\mathrm{tr}\Big [f_z\left(X_{u-}^{(n)}+\frac{r}{n}\mathrm{I_n}\right)-f_z(X_{u-}^{(n)})-\mathcal{D}f_z(X_{u-}^{(n)})\left(\frac{r}{n}\mathrm{I_n}\right)\Big]\rho_{n}^{\alpha}(dr)\pi(dv)du\Bigg|\Bigg]\notag\\
&\leq \int_{0}^{t}\int
_{|r|\leq 1}\lim_{n\to\infty}\mathbb{E}\Bigg[\int_{\mathbb{S}(\mathbb{C}_{n})}\Bigg|\mathrm{tr}\Big [f_z(X_{u-}^{(n)}+rvv^{\ast})-f_z(X_{u-}^{(n)})-\mathcal{D}f_z(X_{u-}^{(n)})(rvv^*)\Big]\notag\\
&\hspace{0.6cm}-\mathrm{tr}\Bigg [f_z\left(X_{u-}^{(n)}+\frac{r}{n}\mathrm{I_n}\right)-f_z(X_{u-}^{(n)})-\mathcal{D}f_z(X_{u-}^{(n)})\left(\frac{r}{n}\mathrm{I_n}\right)\Bigg]\Bigg|\pi(dv)\Bigg]\left(\frac{1+r^2}{r^2}\right)\rho(dr)du=0.
\end{align}

The result now follows from \eqref{nv_2} and \eqref{nv_7}.
\end{proof}

\begin{proof}[Proof of Theorem \protect\ref{thfbm}]

Following the discussion previous to Theorem \protect\ref{thfbm}, it is enough to prove the convergence to \eqref{limit_eq} for a suitable subsequence.
From Proposition \ref{tightness}, the family $\{(\mu_{t}^{(n)})_{t\geq0}:n%
\geq1\}$ is relatively compact. Hence, there exists a subsequence
$\left \{ n_{k} \right \} _{k\geq1}$ such that $\{(\mu_{t}^{(n_{k})})_{t\geq0}:k\geq1\}$
converges weakly to some $(\mu_{t})_{t\geq0}$ in $\mathbb{D}(\mathbb{R}_+,\text{Pr}(\mathbb{R}))$.

First we note that using Lemma III.7 in \cite{ca} together with \eqref{aux_n1_3}
\begin{align}\label{nv_8}
\int_{0}^{t}\int_{\mathbb{S}(\mathbb{C}_{n_k})}\int
	_{\mathbb{R}}\mathrm{tr}\Big [f_z\left(X_{u-}^{(n_k)}+\frac{r}{n_k}\mathrm{I_{n_k}}\right)&-f_z(X_{u-}^{(n_k)})-\mathcal{D}f_z(X_{u-}^{(n_k)})\left(\frac{r}{n_k}\mathrm{I_{n_k}}\right)1_{\{|r|\leq 1\}}\Big]\rho_{n_k}^{\alpha}(dr)\pi(dv)du\notag\\
&=\int_{0}^{t}\int _{\mathbb{R}}\left[\frac{\displaystyle -r\frac{\partial}{\partial z}\psi_{\mu^{(n_k)}}(s,z)}{\displaystyle 1-r\psi_{\mu^{(n_k)}}(s,z)}+r\frac{\partial}{\partial z}\psi_{\mu^{(n_k)}}(s,z)1_{\{|r|\leq1\}}\right]\rho_{n_k}^{\alpha}(dr)ds.
\end{align}
Next, by (\ref{SDEa}) together with \eqref{nv_8} we obtain
\begin{align}\label{cst}
	& \psi_{\mu^{(n_k)}}(t,z)-\psi_{\mu^{(n_k)}}(0,z)+{\sigma_{n_k}}^{2}\int_{0}^{t}\psi_{\mu^{(n_k)}}(s,z)\frac{%
		\partial}{\partial z}\psi_{\mu^{(n_k)}}(s,z)ds+{\gamma}\int_{0}^{t}\frac{\partial}{%
		\partial z}\psi_{\mu^{(n_k)}}(s,z)ds\notag\\
	&-\int_{0}^{t}\int _{\mathbb{R}}\left[\frac{\displaystyle -r\frac{\partial}{\partial z}\psi_{\mu^{(n_k)}}(s,z)}{\displaystyle 1-r\psi_{\mu^{(n_k)}}(s,z)}+r\frac{\partial}{\partial z}\psi_{\mu^{(n_k)}}(s,z)1_{\{|r|\leq1\}}\right]{\rho}_{n_k}^{\alpha}(dr)ds=\Psi_{n_k}(t,z),
\end{align}	
where
\begin{align*}
	&\Psi_{n_k}(t,z)=M_{t}^{n_k,f_z}+\int_{0}^{t}\int_{\mathbb{S}(\mathbb{C}_{n_k})}\int
	_{\mathbb{R}}\Bigg\{\mathrm{tr}\Big [f_z(X_{u-}^{(n_k)}+rvv^{\ast})-f_z(X_{u-}^{(n_k)})-\mathcal{D}f_z(X_{u-}^{(n_k)})(rvv^*)\Big]\notag\\
	&\hspace{3.5cm}-\mathrm{tr}\Bigg [f_z\left(X_{u-}^{(n_k)}+\frac{r}{n_k}\mathrm{I_{n_k}}\right)-f_z(X_{u-}^{(n_k)})-\mathcal{D}f_z(X_{u-}^{(n_k)})\left(\frac{r}{n_k}\mathrm{I_{n_k}}\right)\Bigg]\Bigg\}\rho_{n_k}^{\alpha}(dr)\pi(dv)du.
\end{align*}
By Lemmas \ref{martl} and \ref{converg} we obtain
\begin{align}\label{limit}
\lim_{k\to\infty}\Psi_{n_k}(t,z)=0,\qquad\text{in probability.}
\end{align}
Hence from \eqref{cst}, \eqref{limit} and the continuous mapping theorem we have that the Cauchy--Stieltjes transform of any weak limit $(\mu_t)_{t\geq0}$ of the subsequence $\{(\mu_{t}^{(n_{k})})_{t\geq0}:k\geq1\}$ should satisfy the following partial differential equation
\begin{align}
\frac{\partial}{\partial t}&\psi_{\mu}(t,z)  =-{\sigma^{2}}\psi_{\mu }(t,z)%
\frac{\partial}{\partial z}\psi_{\mu}(t,z)-{\gamma}\frac{\partial }{\partial
z}\psi_{\mu}(t,z)-\frac{\partial}{\partial z}\psi_{\mu}(t,z)\int_{0<|r|\leq1}%
\frac{\psi_{\mu} (t,z)}{1-r\psi_{\mu}(t,z)}(1+r^2)\rho(dr)  \notag
\\
& -\frac{\partial}{\partial z}\psi_{\mu}(s,z)\int_{|r|>1}\frac{r}{%
1-r\psi_{\mu} (s,z)}\left(\frac{1+r^2}{r^2}\right)\rho(dr)  \notag \\
& =-{\sigma^{2}}\psi_{\mu}(t,z)\frac{\partial}{\partial z}\psi_{\mu
}(t,z)-\eta \frac{\partial}{\partial z}\psi_{\mu}(t,z)-\frac{\partial}{%
\partial z}\psi_{\mu}(s,z)\left( \int_{0<|r|\leq1}\frac{\psi_{\mu}(t,z)}{%
1-r\psi_{\mu }(t,z)}(1+r^2)\rho(dr)+\int_{|r|\leq1}r\rho(dr)\right)
\notag \\
& -\frac{\partial}{\partial z}\psi_{\mu}(s,z)\left( \int_{|r|>1}\frac {r}{%
1-r\psi_{\mu}(s,z)}\left(\frac{1+r^2}{r^2}\right)\rho(dr)-\int_{|r|>1}\frac{1}{r}%
\rho(dr)\right)  \notag \\
& =-{\sigma^{2}}\psi_{\mu}(t,z)\frac{\partial}{\partial z}\psi_{\mu
}(t,z)-\eta \frac{\partial}{\partial z}\psi_{\mu}(t,z)-\frac{\partial}{%
\partial z}\psi_{\mu} (t,z)\int_{\mathbb{R}\backslash \{0\}}\frac{%
\psi_{\mu}(t,z)+r}{1-r\psi_{\mu}(t,z)}\rho(dr).  \notag
\end{align}
\end{proof}


\begin{thebibliography}{99}
\bibitem{A} \textsc{Anderson, G. W., Guionnet, A., Zeitouni, O}. \textit{An
Introduction to Random Matrices}. Cambridge University Press, (2009).

\bibitem{al} \textsc{Aldous, D.} Stopping times and tightness. \textit{Ann.
Probab.} \textbf{6}, 335--340, (1978).

\bibitem{BNT04} \textsc{Barndorff-Nielsen, O. E., and Thorbj\o rnsen, S.} A
connection between free and classical infinite divisibility. \emph{Inf. Dim.
Anal. Quantum Probab.} \textbf{7}, 573--590, (2004).

\bibitem{BNT06} \textsc{Barndorff-Nielsen, O. E., and Thorbj\o rnsen, S.}
Classical and free infinite divisibility and L\'{e}vy processes. In: \emph{%
Quantum Independent Increment Processes II}. M. Sch\"{u}rmann and U. Franz
(eds). \textit{Lecture Notes in Mathematics} \textbf{1866}, Springer-Verlag,
Berlin, (2006).

\bibitem{BH} \textsc{Bhatia, R. } \emph{Matrix Analysis}. \textit{Graduate Text in Mathematics \textbf{169}.} Springer-Berlag, Berlin, (1997).

\bibitem{BG} \textsc{Benaych-Georges, F. } Classical and free infinitely
divisible distributions and random matrices. \emph{Ann. Probab.,} \textbf{33}%
, 1134--1170, (2005).

\bibitem{BP} \textsc{Bercovici, H., and Pata, V., with an appendix by P.
Biane }Stable laws and domains of attraction in free probability theory.
\emph{Ann. of Math.} \textbf{149}, 1023--1060, (1999).

\bibitem{BV} \textsc{Bercovici, H., and Voiculescu, D.} Free convolution of
measures with unbounded supports. \emph{Indiana Univ. Math. J. }\textbf{42},
733--773, (1993).

\bibitem{Bi} \textsc{Biane, P.} Free Brownian motion, free stochastic
calculus and random matrices. \emph{Free Probability Theory}\ (Waterloo, ON,
Canada, 1995). \emph{Fields Inst. Commun}. \textbf{12}, Amer. Math. Soc.,
RI, pp. 1--19, (1997).

\bibitem{Bi2} \textsc{Biane, P.} Processes with free increments. \emph{%
Math. Z.} \textbf{227}, 143--174, (1998).

\bibitem{Bru1} \textsc{Bru, M. F.} Diffusions of perturbed principal
component analysis. \emph{J. Multivariate Anal.} \textbf{29}, 127--136,
(1989).

\bibitem{Bru2} \textsc{Bru, M. F., }Wishart processes.\emph{\ J. Theor. Prob}%
. \textbf{4}, 1, 725--751, (1991).

\bibitem{ca} \textsc{Cabanal-Duvillard, T.} A matrix representation of the
Bercovici--Pata bijection. \emph{Electron. J. Probab.} \textbf{10},
632--661, (2005).

\bibitem{CG} \textsc{Cabanal Duvillard, T., and Guionnet, A.} Large
deviations upper bounds for the laws of matrix-valued processes and
non-communicative entropies. \emph{Ann. Probab}. \textbf{29}, 1205--1261,
(2001).

\bibitem{cha} \textsc{Chan, T}. The Wigner semicircle law and eigenvalues
of matrix-valued diffusions. \emph{Probab. Theory Relat. Fields} \textbf{93,}
249--272, (1992).

\bibitem{Dy62} \textsc{Dyson, F. J.} A Brownian-motion model for the
eigenvalues of a random matrix. \emph{J. Math. Phys.}\textit{\ }\textbf{3},
1191--1198, (1962).

\bibitem{E} \textsc{Etheridge, A. } \textit{An Introduction to
Superprocesses.} American Mathematical Society, Providence, RI, (2000).

\bibitem{hp} \textsc{Hiai, F., and Petz, D. } \emph{The Semicircle Law,
Free Random Variables and Entropy}. \textit{Volume 77 of Mathematical
Surveys and Monographs.} American Mathematical Society, Providence, RI,
(2000).

\bibitem{HJ} \textsc{Horn, R. A., and Johnson, C. R.} \emph{Topics in Matrix Analysis.} Cambridge Uni-
versity Press, (1991).

\bibitem{HS} \textsc{Hotta, I., and Schlei$\beta$inger, S.,} Limits of radial multiple SLE and a Burgers--Loewner differential equation. \emph{ArXiv:1904.02556v2 [math.PR]}, (2020).

\bibitem{Me} \textsc{Mehta, M. L}. \textit{Random Matrices}, 3rd edition.
Elsevier, (2004).

\bibitem{MR} \textsc{M\'{e}l\'{e}ard, S., and Roelly, S.} Sur les
convergences \'{e}troite ou vague de processus \`{a} valeurs mesures.
\textit{C. R. Acad. Sci. Paris S\'{e}r. I Math.} \textbf{317}, 785--788,
(1993).

\bibitem{NS} \textsc{Nica, A., and Speicher, R.} \textit{Lectures on the
Combinatorics of Free Probability}\emph{. }\emph{London Mathematical\
Society Lecture Notes Series} \textbf{335}, Cambridge University Press,
(2006).

\bibitem{NT} \textsc{Nourdin, I., and Taqqu, M}. Central and non-central
limit theorems in a free probability setting. \emph{J. Theoret. Probab.}
\textbf{27}, 220--248, (2014).

\bibitem{PPP} \textsc{Pardo, J.C., Perez, J.-L., Perez-Abreu, V}. A random
matrix approximation to free fractional Brownian motion. \emph{J. Theoret. Probab.}
\textbf{29}, 1581--1598, (2016).


\bibitem{PV} \textsc{P\'{e}rez-Abreu, V., and Rocha-Arteaga, A.} On the
process of the eigenvalues of Hermitian L\'{e}vy processes. In: \emph{The
	Fascination of Probability, Statistics and their Applications: Festschrift
	in Honour of Ole E. Barndorff-Nielsen}, M. Podolskij, R. Stelzer, S. Thorbj%
\o rnsen, A. Veraart, eds., Springer. pp. 231-249, (2016).

\bibitem{AT} \textsc{P\'{e}rez-Abreu, V., and Tudor, C}. On the Traces of
Laguerre Processes.\emph{\ Electron. J. Probab.} \textbf{14}, 2241--2263,
(2009).

\bibitem{rc} \textsc{Roelly-Coppoletta, S.} A criterion of convergence of
measure-valued processes: Application to measure branching processes.
\textit{Stoch. Stoch. Rep.} \textbf{17}, 43--65, (1986).

\bibitem{rosh} \textsc{Rogers L.C.G., and Shi, Z}. Interacting Brownian
particles and the Wigner law.\emph{\ Probab. Theory Relat. Fields}\ \textbf{%
95}, 555--570, (1993).

\bibitem{Ta12} \textsc{Tao,\ T.} \emph{Topics in Random Matrix Theory}.
Amer. Math. Soc., (2012).

\bibitem{VDN} \textsc{Voiculescu, D., Dykema, K. J., Nica, A}. \textit{Free
Random Variables}. CRM Monographs Series, Vol. \textbf{1}, pp. 1--18, (1992).

\bibitem{V00} \textsc{Voiculescu, D.}
\textit{"Lectures on free probability theory".  } \emph{Lectures on
	Probability and Statistics: Ecole d\`{e} Ete de Probabilities de Saint-Flour
	XXVIII.} Lecture Notes in Mathematics. Springer. Vol. \textbf{1738}, pp. 283-349, (2000).

\bibitem{Wig} \textsc{Wigner, E.} Characteristic vectors of bordered random
matrices with infinite dimensions. \emph{Ann. Math.}\textbf{\ 62}, 548--564,
(1955).
\end{thebibliography}
\end{document}